\newtheorem{theorem}{Theorem}[section]
\newtheorem{proposition}[theorem]{Proposition}
\newtheorem{lemma}[theorem]{Lemma}
\newtheorem{conjecture}[theorem]{Conjecture}
\newtheorem*{claim*}{Claim}
\theoremstyle{definition}
\newtheorem{remark}[theorem]{Remark}
\newtheorem*{remark*}{Remark}
\newtheorem{definition}[theorem]{Definition}
\newtheorem{observation}[theorem]{Observation}
\newcommand\ds{\displaystyle}
\newcommand\Eb{\mathbb{E}}  
\newcommand\Pb{\mathbb{P}}  
\newcommand\Zb{\mathbb{Z}}
\newcommand\Nb{\mathbb{N}}
\newcommand\Gb{\mathbb{G}}
\newcommand\Gc{\mathcal{G}}
\newcommand\Oc{\mathcal{O}}  
\newcommand\dhat{\hat{d}}
\newcommand\comp{\mathrm{c}}
\newcommand\phihat{\hat{\varphi}}
\newcommand\al{\alpha}
\newcommand\be{\beta}
\newcommand\eps{\varepsilon}
\newcommand\alstar{\alpha^{\star}}
\newcommand\kappastar{\kappa^{\star}}
\newcommand\alfm{\alpha^{\mathrm{FM}}}
\newcommand\alfc{\alpha^{\mathrm{FrC}}}
\newcommand\alrs{\alpha^{\mathrm{RS}}}
\newcommand\alrsb{\alpha^{\mathrm{1-RSB}}}
\newcommand\bemax{\beta_{\mathrm{max}}}
\newcommand\kind{k^\mathrm{ind}}
\newcommand\muedge{\mu_\mathrm{edge}}
\newcommand\muvertex{\mu_\mathrm{vertex}}
\newcommand\sm{\setminus}
\DeclareMathOperator{\dist}{dist}
\newcommand{\defeq}{\mathrel{\vcenter{\baselineskip0.5ex \lineskiplimit0pt
                     \hbox{\scriptsize.}\hbox{\scriptsize.}}}%
                     =}
\DeclareFontFamily{U}{matha}{\hyphenchar\font45}
\DeclareFontShape{U}{matha}{m}{n}{
  <-6> matha5 <6-7> matha6 <7-8> matha7
  <8-9> matha8 <9-10> matha9
  <10-12> matha10 <12-> matha12
  }{}
\DeclareSymbolFont{matha}{U}{matha}{m}{n}
\DeclareMathSymbol{\Lt}{3}{matha}{"CE}
\title[Star decompositions and independent sets]{Star decompositions and independent sets\\in random regular graphs}
\author{Viktor Harangi}
\address{HUN-REN Alfr\'ed R\'enyi Institute of Mathematics, Budapest, Hungary}
\email{harangi@renyi.hu}
\thanks{The author was supported by the MTA-R\'enyi Counting in Sparse Graphs ``Momentum'' Research Group, by NRDI 
(grant KKP 138270), and by the Hungarian Academy of Sciences (J\'anos Bolyai Scholarship).}
\begin{document}

\begin{abstract}
A $k$-star decomposition of a graph is a partition of its edges into $k$-stars (i.e., $k$ edges with a common vertex). The paper studies the following problem: for what values of $k>d/2$ does the random $d$-regular graph have a $k$-star decomposition (asymptotically almost surely, provided that the number of edges is divisible by $k$)?

Delcourt, Greenhill, Isaev, Lidick\'y, and Postle proposed the following conjecture. It is easy to see that a $k$-star decomposition necessitates the existence of an independent set of density $1-d/(2k)$. So let $k^{\mathrm{ind}}_d$ be the largest $k$ for which the random $d$-regular graph a.a.s.~contains an independent set of this density. Clearly, $k$-star decompositions cannot exist for $k>k^{\mathrm{ind}}_d$. The conjecture suggests that this is essentially the only restriction: there is a threshold $k^\star_d$ such that $k$-star decompositions exist if and only if $k \leq k^\star_d$, and it (basically) coincides with the other threshold, i.e., $k^\star_d \approx k^{\mathrm{ind}}_d$.

We confirm this conjecture for sufficiently large $d$ by showing that a $k$-star decomposition exists if $d/2< k < k^{\mathrm{ind}}_d$. In fact, we prove the existence even if $k=k^{\mathrm{ind}}_d$ for degrees $d$ with asymptotic density $1$. 
\end{abstract}


\maketitle

\section{Introduction} \label{sec:intro}

For a positive integer $d \geq 3$, let $\Gc_{N,d}$ denote the $N$-vertex random $d$-regular graph, that is, a uniform random graph among all simple $d$-regular graphs on the vertex set $\{1,\ldots, N\}$. We say that $\Gc_{N,d}$ \emph{asymptotically almost surely} (a.a.s.~in short) has a property if the probability that $\Gc_{N,d}$ has this property converges to $1$ as $N \to \infty$. 

Given an integer $k \geq 2$, it is natural to ask whether the edges of $\Gc_{N,d}$ can be partitioned into edge-disjoint stars, each containing $k$ edges. Here we need to restrict ourselves to those $N$ for which the number of edges ($Nd/2$) is divisible by $k$. If such a partition exists with probability $1-o_N(1)$, then we say that $\Gc_{N,d}$ a.a.s.~has a $k$-star decomposition.

Star decompositions are expected to exist a.a.s.~for every $k$ in the range $k \leq d/2$. This was rigorously proven for odd $k$ in \cite{delcourt2023decomposing} using previous results about so-called $\be$-orientations.\footnote{It was incorrectly claimed in \cite{delcourt2018random} that the case $k \leq d/2$ is fully solved, which was later clarified in \cite{delcourt2023decomposing}.}

As for the range $k>d/2$, it was pointed out in \cite{delcourt2023decomposing} that the vertices that are not the center of any star 
form an independent set of density\footnote{By density we mean the size of the set divided by the total number of vertices in the graph.}
\begin{equation} \label{eq:al_dk}
\al_{d,k} \defeq 1 - \frac{d}{2k} . 
\end{equation}
Indeed, given a $k$-star decomposition, in each star we may direct the edges away from the center. The resulting orientation of the graph is such that each out-degree is $0$ or $k$, because each vertex can be the center of at most one star due to $k>d/2$. Hence the number of vertices with out-degree $k$ is equal to the number of stars, which is clearly $Nd/(2k)$. It follows that the complement (i.e., the set of vertices with out-degree $0$) has density $\al_{d,k}$. Finally, it holds trivially for any orientation that vertices with out-degree $0$ form an independent set.

This simple observation links the problem of star decompositions to the widely-studied topic of independent sets in random regular graphs. In particular, a $k$-star decomposition may only exist if $\Gc_{N,d}$ a.a.s.~contains an independent set of size $\al_{d,k} N$. It is known \cite{bayati2013interpolation} that for each $d \geq 3$ there exists an $\alstar_d$ such that, for any $\eps>0$, the independence ratio of $\Gc_{N,d}$ is a.a.s.~$\eps$-close to $\alstar_d$. In other words, the independence ratio of $\Gc_{N,d}$ converges to $\alstar_d$ in probability.

For convenience, we introduce the inverse function of $k \mapsto \al_{d,k}=1-d/(2k)$:
\begin{equation} \label{eq:kappa}
\kappa_d(\al) \defeq \frac{d}{2(1-\al)} 
\quad \text{so that} \quad 
\kappa_d( \al_{d,k} ) = k .
\end{equation}
Furthermore, let   
\begin{equation} \label{eq:k_ind}
\kappastar_d \defeq \kappa_d(\alstar_d) 
\quad \text{and} \quad 
\kind_d = \lfloor \kappastar_d \rfloor .
\end{equation}
The point is that for $k \geq \kind_d+1 > \kappastar_d$ we have $\al_{d,k} > \alstar$. Therefore $\Gc_{N,d}$ a.a.s.~does not contain an independent set of density $\al_{d,k}$, and hence it cannot have a $k$-star decomposition, either. Is this the only restriction? Is $\kind_d$ the threshold\footnote{Normally we can be sure that $\Gc_{N,d}$ a.a.s.~contains an independent set of density $\al_{d,k}$ for $k=\kind_d = \lfloor \kappastar_d \rfloor$. In principle, it may happen that $\kappastar_d$ is an integer, in which case this is only guaranteed for $k=\kappastar_d-1$. It would probably be safe to assume that this is not the case for any $d$.} for the star decomposition problem as well? The following is a slight variant of \cite[Conjecture 1.1]{delcourt2023decomposing}.
\begin{conjecture} \label{conj:sd}
Let $d \geq 3$. If $d/2 < k \leq \kind_d$, then $\Gc_{N,d}$ a.a.s.~has a $k$-star decomposition as $N \to \infty$ with $Nd$ being divisible by $2k$ (apart from $d=5$ and perhaps a small number of further exceptional degrees).
\end{conjecture}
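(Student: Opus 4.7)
The approach I would take reduces existence of a $k$-star decomposition to a pair of structural conditions on an independent set of size $\al_{d,k}N$, and then verifies these conditions a.a.s.~via a planted-independent-set/second-moment argument. When $k>d/2$ no vertex can be the centre of two stars, so a $k$-star decomposition in $G=\Gc_{N,d}$ is exactly an edge orientation in which every vertex has out-degree $0$ or $k$. The out-degree-$0$ vertices form an independent set $I$ of size $\al_{d,k}N$; conversely, once such $I$ is fixed, a $k$-star decomposition exists iff $G[S]$, with $S=V\sm I$, admits an orientation in which each $v\in S$ has out-degree $f(v)\defeq k-d_I(v)$. By Hakimi's orientation theorem this happens iff $f(v)\geq 0$ for all $v\in S$ and $e_G(T)\leq \sum_{v\in T}f(v)$ for every $T\subseteq S$. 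Using the identity $d|T|=2e_G(T)+e_G(T,S\sm T)+e_G(T,I)$, the latter rewrites as the one-sided isoperimetric inequality
\begin{equation*}
e_G(T,I)\leq e_G(T,S\sm T)+(2k-d)|T| \qquad\text{for every } T\subseteq S,
\end{equation*}
which is tight at $T=S$ since $d|I|=(2k-d)|S|$.

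It therefore suffices to exhibit a.a.s.~an independent set $I$ of size $\al_{d,k}N$ in $\Gc_{N,d}$ satisfying (i) $d_I(v)\leq k$ for all $v\in S$ and (ii) the isoperimetric inequality for all $T\subseteq S$. I would work in the planted configuration model: pick $I_0\subset[N]$ of size $\al_{d,k}N$ and draw a uniformly random $d$-regular graph on $[N]$ conditional on $I_0$ being independent (equivalently, a bipartite configuration pairing between the two sides plus an internal pairing inside $S_0$). Since $k\leq\kind_d$ gives $\al_{d,k}\leq\alstar_d$, the first moment of the number of independent sets of the required size in $\Gc_{N,d}$ is exponentially large, and standard small-subgraph-conditioning / contiguity machinery pushes a.a.s.~planted-model statements over to $\Gc_{N,d}$. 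In this model, for $v\in S_0$ the variable $d_{I_0}(v)$ is hypergeometric with mean $2k-d$; condition (i) will fail at $\Theta(N)$ vertices of $S_0$, but this should be repairable by a local alteration that swaps each bad $v$ with one of its $I_0$-neighbours and a free vertex of $S_0$, moving only $o(N)$ vertices while preserving the isoperimetric inequality.

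The crux of the argument is verifying (ii) for every $T\subseteq S$ in the planted model. A first-moment computation gives the expected number of sets $T$ of density $\tau=|T|/|S|$ that violate the bound as $\exp(N\Phi_d(\tau,k)+o(N))$, where $\Phi_d$ is an explicit rate function mixing the entropy of $T$ with the large-deviation cost of concentrating $I$-edges inside $T$. The goal becomes showing $\Phi_d(\tau,k)<0$ for all $\tau\in(0,1)$ whenever $d/2<k\leq\kind_d$. This is where I expect the main obstacle to sit: the isoperimetric bound is saturated at $T=S$, so $\Phi_d(1,k)=0$, and ruling out interior violations requires careful derivative and convexity control at that boundary point. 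For generic (and especially large) $d$, the linearisation at $\tau=1$ should leave enough slack to force $\Phi_d<0$ on $(0,1)$; for sporadic small degrees such as $d=5$, however, parity or quantisation effects may genuinely obstruct the argument, which is consistent with the exceptional-degree clause in the conjecture and the abstract's restriction to large $d$. An alternative route that avoids the alteration step altogether is to run a direct second-moment argument on the number of \emph{good} pairs $(I,\sigma)$ (i.e.~$k$-star decompositions), reducing the problem to the same rate-function estimate on $\Phi_d$.
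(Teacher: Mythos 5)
Your reduction---via out-degree-$0/k$ orientations, the independent set $I$ of density $\al_{d,k}$, and the Hakimi/Frank cut condition $e[T,I]\leq e[T,S\sm T]+(2k-d)|T|$ with tightness at $T=S$---is exactly the paper's starting point (Observation~\ref{obs:sd} and Proposition~\ref{prop:reg_orientation}), and it is derived correctly. The gap is in the step you describe as ``standard small-subgraph-conditioning / contiguity machinery pushes a.a.s.~planted-model statements over to $\Gc_{N,d}$.'' That step is not standard at the densities relevant here, and in fact it fails. For $\al_{d,k}$ near $\alstar_d$ the second moment of the number of independent sets of that density is exponentially larger than the square of the first moment (clustering/shattering), so the planted and null models are not contiguous; this is precisely why Ding--Sly--Sun needed the frozen-configuration analysis rather than a vanilla second-moment computation, and it is also why the small-subgraph-conditioning route of \cite{delcourt2023decomposing}---essentially the ``alternative route'' you sketch at the end---only reaches $k\leq d/2+\frac16\log d$, far short of $\kind_d\sim d/2+\log d$.

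The paper sidesteps this entirely and never invokes the planted model or a second moment. It starts from \emph{any} independent set $A$ of density $\alstar_d-\eps$, whose a.a.s.~existence is already known by the definition of $\alstar_d$ (via \cite{bayati2013interpolation}) and the identification $\alstar_d=\alfc_d$ for large $d$. It then removes $o(N)$ vertices from $A$ to make it \emph{$\dhat$-thin} (every $v\notin A$ has at most $\dhat\approx\tau d$ neighbours in $A$), controlling the number of removals by a pure first-moment bound (Lemma~\ref{lem:thin} and Lemma~\ref{lem:phihat}). The key structural trick, which your proposal is missing, is Lemma~\ref{lem:sd}: a deterministic statement showing that a $\dhat$-thin independent set of density $\al_{d,k}$ plus two crude average-degree bounds on induced subgraphs already forces a $k$-star decomposition. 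It splits the cut condition into two regimes: for small $U\subseteq A^\comp$ one applies an isoperimetric bound directly; for large $U$, the complement $W=A^\comp\sm U$ is small, one rewrites the condition on $W$ and uses thinness $\deg_H(v)\geq d-\dhat$ to obtain $e[W]+e[W,U]\geq(d-k)|W|$ from the much weaker requirement $e[W]\leq(k-\dhat)|W|$. Since $\dhat$ is small, $k-\dhat\approx d/2$, and even the elementary bound ``every sub-half subset has average degree $<\varrho d$ for some $\varrho<1$'' (Proposition~\ref{prop:weak}) suffices. Your plan instead requires proving $\Phi_d(\tau,k)<0$ on all of $(0,1)$ with $\Phi_d(1,k)=0$, which needs derivative control at the tight endpoint that the paper never has to face. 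Finally, your proposed repair ``swap each bad $v$ with one of its $I_0$-neighbours and a free vertex'' risks creating new bad vertices and is not obviously terminating; the paper's alteration only \emph{removes} vertices from $A$, so it is monotone, preserves independence trivially, and can only decrease $e[v,A]$.
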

Asymptotically we have $\alstar_d \sim 2\log(d)/d$, and hence 
\[ \kind_d \sim \frac{d}{2} + \log d 
\quad \text{as } d \to \infty .\]
In \cite{delcourt2023decomposing} ``one-sixth'' of the conjecture was proven: they showed the a.a.s.~existence of $k$-star decompositions in the range 
\[ \frac{d}{2} \, < k \leq \, \frac{d}{2}+\frac{1}{6}\log d .\]
The method in \cite{delcourt2023decomposing} works well for small values of $d$: for $d\leq 100$ it actually covers the whole range $(d/2, \kind_d]$ except maybe the endpoint $\kind_d$.

In this paper we improve $1/6$ to the optimal constant $1$. In fact, we prove that $k$-star decompositions exist up to $k= \kind_d - 1$ if $d$ is sufficiently large, and even for $k= \kind_d$ for most degrees. This fully settles \cite[Conjecture 1.1]{delcourt2023decomposing} for large $d$.
%
\begin{theorem} \label{thm:main}
For sufficiently large $d$ and for $d/2 < k \leq \kind_d-1$, the random $d$-regular graph $\Gc_{N,d}$ asymptotically almost surely has a $k$-star decomposition (as $N \to \infty$ with $Nd$ being divisible by $2k$). Even when the divisibility condition is not satisfied, we still have the following: $\Gc_{N,d}$ a.a.s.~contains edge-disjoint $k$-stars covering all but at most $k-1$ edges of the graph.

Moreover, this holds even for $k=\kind_d$ provided that 
\begin{equation} \label{eq:frac_part}
\{ \kappastar_d \}= \left\{ \frac{d}{2(1-\alstar_d)} \right\} 
> \frac{(\log d)^3}{d} ,
\end{equation}
where $\{\cdot\}$ denotes the fractional part and $\alstar_d$ denotes the asymptotic independence ratio of $\Gc_{N,d}$. Furthermore, condition \eqref{eq:frac_part} is satisfied by degrees $d$ with asymptotic density $1$.
\end{theorem}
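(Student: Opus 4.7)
The plan is to translate each $k$-star decomposition of $\Gc_{N,d}$ into a pair consisting of an independent set of the prescribed density together with a suitable orientation of the residual graph. Suppose $I \subset V(\Gc_{N,d})$ is an independent set with $|I| = \al_{d,k} N$; write $C = V \sm I$ and set $t_c := k - \deg_I(c)$, where $\deg_I(c)$ is the number of neighbors of $c$ in $I$. Since $k > d/2$ each vertex is center of at most one star, and the centers must exhaust $C$; thus a $k$-star decomposition exists iff $\Gc_{N,d}[C]$ admits an orientation in which each $c \in C$ has out-degree exactly $t_c$. By the classical out-degree sequence theorem (Hakimi / Frank--Gy\'arf\'as), such an orientation exists iff $\sum_{c \in C} t_c = e_G(C,C)$ (automatic from the choice of $\al_{d,k}$) together with
\begin{equation*}
k|S| \;\geq\; e_G(S,S) + e_G(S,I) \qquad \text{for every } S \subseteq C. \tag{$\star$}
\end{equation*}
So the whole task reduces to producing an independent set $I$ of the right size that satisfies $(\star)$ a.a.s.

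I would work in the planted configuration model: fix $\al = \al_{d,k}$, sample a uniformly random $\al N$-subset $I_0 \subset V$, and generate $\Gc_{N,d}$ conditioned on $I_0$ being independent. Via the standard contiguity between the planted and the plain random $d$-regular model (second-moment/small-subgraph conditioning for independent sets, in the spirit of \cite{bayati2013interpolation} and used in \cite{delcourt2023decomposing}), it suffices to verify $(\star)$ a.a.s.~in the planted model. There the co-degrees $\deg_I(c)$ for $c \in C$ concentrate sharply around $d\al/(1-\al) = 2k-d$, so the targets $t_c$ concentrate around $2(k - d/2) \in (0,d)$, and $\Gc_{N,d}[C]$ behaves like an approximately $(2(d-k))$-regular graph on $|C|=dN/(2k)$ vertices. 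In particular $(\star)$ is \emph{saturated} at $S = C$, and strictly slack on average for smaller $S$; the issue is to rule out atypical $S$.

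The technical core is the uniform verification of $(\star)$, which I would split by $\si := |S|/|C|$. For $\si$ very small the expansion of $\Gc_{N,d}$ yields $e_G(S,S) = o(|S|)$ and $e_G(S,I) \leq (2k-d+o(1))|S|$, leaving slack $(d-k - o(1))|S|>0$. For $\si$ close to $1$, pass to the complement $T = C \sm S$ and use the saturated equality at $S = C$ to reduce to the small-$T$ case. The delicate range is $\si \in (\eps, 1 - \eps)$: a union bound
$\binom{|C|}{|S|} \cdot \Pb[e_G(S,S) + e_G(S,I) > k|S|]$
must be made small, where the mean gap $k|S| - \Eb[e_G(S,S)+e_G(S,I)]$ is of order $(k - \kappastar_d(1+o(1)))\si(1-\si)|C|$. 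For $k \leq \kind_d - 1$ this gap is $\Omega(1)$, comfortably beating the binomial entropy; for $k = \kind_d$ the gap equals $\{\kappastar_d\}$, and \eqref{eq:frac_part} is precisely calibrated so that the Chernoff-type rate in the configuration model overcomes the entropy via the scaling $|C| \sim N/d$.

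Finally, the density-$1$ assertion for \eqref{eq:frac_part} follows from the known smooth asymptotic expansion $\alstar_d = 2(\log d)/d + O(\log\log d / d)$, which makes $\kappastar_d$ vary on a scale much larger than $1/d$, so the fractional parts $\{\kappastar_d\}$ are equidistributed in $[0,1]$ in density, and the set where $\{\kappastar_d\} \leq (\log d)^3/d$ has asymptotic density $0$. The non-divisibility variant is handled by deleting at most $k-1$ edges to restore $2k \mid Nd$ and applying the main statement to the perturbed graph (which remains random-regular-like for the argument above). The main obstacle will be the intermediate-$\si$ regime at $k = \kind_d$: the mean gap is only $\{\kappastar_d\}$, the entropy one must defeat is $\Theta(|C|)$, and both the planted-model tail bound for $e_G(S,S)+e_G(S,I)$ and its comparison with the entropy need to be carried out sharply to leading order in $1/d$, not merely up to constants.
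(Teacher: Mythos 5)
Your reduction of $k$-star decompositions to an orientation problem on $\Gc[C]$ with prescribed out-degrees $t_c = k - \deg_I(c)$ is the same equivalence the paper uses (Observation~\ref{obs:sd} together with Proposition~\ref{prop:reg_orientation}). But from that point your route diverges, and it has two gaps that I believe are fatal in the form you state them.

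First, the claim that in the planted model the co-degrees $\deg_I(c)$ ``concentrate sharply around $2k-d$'' is wrong in the sense that matters. For fixed $d$ and $N\to\infty$, $\deg_I(c)$ is approximately binomial with mean $\approx 2\log d$, and the probability of $\deg_I(c) > k \approx d/2$ is a small \emph{constant} $p(d)>0$. So in the planted model a constant fraction $p(d)N$ of vertices in $C$ have $t_c<0$, which makes the prescribed-out-degree orientation outright impossible for the planted set $I_0$. You cannot verify $(\star)$ for $I_0$ itself; you must first modify $I_0$ to remove the high-codegree obstructions. The paper handles exactly this by introducing $\dhat$-thin independent sets and a ``thinning'' step (Lemma~\ref{lem:thin}): start from a slightly denser independent set, delete a small (first-moment-controlled) number of vertices to bring every co-degree down to $\dhat < k$, and only then apply the orientation criterion. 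You have no analogous step.

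Second, the contiguity between the planted model and $\Gc_{N,d}$ at density $\al_{d,k}\approx\alstar_d$ is precisely what is \emph{not} available. The paper explains (Section~\ref{sec:ind_ratio}) that near the threshold the first moment overcounts exponentially due to clustering (the $\alfc_d<\alfm_d$ phenomenon of Ding--Sly--Sun), which is exactly the regime where the naive second moment / planted-model comparison fails. Indeed, the paper explicitly advertises that its approach ``only relies on first moment calculations'' to avoid the technical small-subgraph-conditioning machinery of \cite{delcourt2023decomposing}; invoking contiguity here reintroduces that machinery without establishing it. Related smaller issues: your mean-gap heuristic for $k=\kind_d$ mixes a per-vertex quantity with the constant $\{\kappastar_d\}$, and your treatment of the non-divisible case (delete $\leq k-1$ edges to restore divisibility) produces a non-regular graph, so it does not reduce to the main statement; the paper instead uses a set of size $\lceil \al_{d,k}N\rceil$ and the degree-constrained orientation theorem directly (Remark~\ref{rm:not_divisible}).

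For contrast, the paper's actual proof takes a $\dhat$-thin independent set of density slightly below $\alstar_d$ (whose existence follows from a purely first-moment bound, Lemma~\ref{lem:phihat}), plugs it into the deterministic sufficient condition of Lemma~\ref{lem:sd}, and verifies the remaining expansion-type conditions with Bollob\'as's isoperimetric bound (Proposition~\ref{prop:weak}). Your density-$1$ argument for condition \eqref{eq:frac_part} is essentially correct and matches the paper's.
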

Besides Theorem~\ref{thm:main} (concerned with the case of large degrees), we also tested our approach for specific values of $d$. There is a conjecture regarding the precise value of $\alstar_d$ for $d \geq 20$, and we worked under the assumption that this conjecture holds true in order to see if our approach could potentially prove the existence of $k$-star decompositions up to $k=\kind_d$ for a given $d$ in the range $30 \leq d \leq 3000$. Surprisingly, the answer is positive apart from a small number of exceptional degrees; see Section~\ref{sec:specific} for details.

\subsection*{Proof outline}
In \cite{delcourt2023decomposing} the so-called small subgraph conditioning method was used. (Actually, \cite{delcourt2023decomposing} is an extension of an earlier paper \cite{delcourt2018random}, where $\Gc_{N,4}$ was decomposed into $3$-stars.) At the heart of that approach lies a second moment calculation, the completely rigorous treatment of which leads to very technical computations. 

In comparison, our approach only relies on first moment calculations. This is due to the fact that our method produces star decompositions from independent sets using \emph{in-degree-regular orientations} along the way. Next we give a brief outline of the proof.

For a fixed $d$ and $k>d/2$, the following are equivalent objects to consider for a $d$-regular simple graph $G$:
\begin{itemize}
\item a $k$-star decomposition of $G$;
\item an orientation of $G$ whereby each out-degree is $0$ or $k$;
\item an independent set $A$ of density $\al_{d,k}=1-d/(2k)$ along with a \emph{$(d-k)$-in-regular orientation} of the induced subgraph $G[A^\comp]$, that is, an orientation of $G[A^\comp]$ for which each in-degree is exactly $d-k$, where $A^\comp \defeq V(G) \sm A$.
\end{itemize}
Note that a graph $H$ has an $\ell$-in-regular orientation if and only if its average degree is $2 \ell$ and every induced subgraph of $H$ has average degree at most $2 \ell$ (see Proposition~\ref{prop:reg_orientation}). So we need to find an independent set $A$ in $\Gc=\Gc_{N,d}$ for which $H=\Gc[A^\comp]$ satisfies this condition.

First we try to find an $A$ with the property that any vertex outside $A$ has a limited number of neighbors in $A$ (at most $\dhat \approx \tau d$ neighbors for some constant $0<\tau<1/2$). We will call such independent sets \emph{$\dhat$-thin}, see Definition~\ref{def:thin}. We start from an independent set $A$ with density $\alstar_d-\eps$. Then we keep removing vertices from $A$ that are neighbors of outside vertices with too many edges going to $A$. Here we need delicate first moment bounds in $\Gc_{N,d}$ to show that the total number of removed vertices is small (with high probability). Specifically, in Section~\ref{sec:thin} we prove for large $d$ that $\Gc_{N,d}$ a.a.s.~has a $\tau d$-thin independent set of density at least $\alstar_d - (\log d)^3/d^2$; see Lemma~\ref{lem:thin}(b). Then we can choose $k$ in such a way that $\al_{d,k}$ is below this density.

Once we have a thin independent set of appropriate density, we will use Lemma~\ref{lem:sd}. It is a (deterministic) result about $d$-regular graphs that produces star decompositions from thin independent sets. In order to check the conditions of the lemma in our setting, we need an upper bound on the average degree of induced subgraphs of $\Gc_{N,d}$. For the purposes of Theorem~\ref{thm:main} a very basic bound (see Proposition~\ref{prop:weak}) will suffice.

\subsection*{Notations}

As usual, $V(G)$ denotes the vertex set of a graph $G$, and we write $\deg_G(v)$ or simply $\deg(v)$ for the degree of a vertex $v$, while $G[U]$ stands for the induced subgraph on $U \subseteq V(G)$. By \emph{density} we always refer to the relative size $|U| \big/ |V(G)|$ of a subset $U$. Furthermore, when $G$ is clear from the context, we use the following shorthand notations.
\begin{itemize}
\item Complement: $U^\comp \defeq V(G) \sm U$.
\item Edge count: $e(G)$ denotes the total number of edges, while $e[U] \defeq e\big( G[U] \big)$ is the number of edges inside $U$. Also, for disjoint subsets $U,U' \subseteq V(G)$ we write $e[U,U']$ for the number of edges between $U$ and $U'$. Finally, let $e[v,U] \defeq e[\{v\},U]$.
\end{itemize}
Throughout the paper, the function $h$ stands for $h(x)=-x \log x$.

\subsection*{Organization of the paper}
Section~\ref{sec:star_decomp} contains the basic idea behind our approach: we introduce thin independent sets and show how they can be used to find star decompositions.  
Section~\ref{sec:ind_ratio} briefly summarizes what is known about the independence ratio of the random $d$-regular graph $\Gc_{N,d}$, while Section~\ref{sec:first_moment} discusses the first moment method in $\Gc_{N,d}$. In Section~\ref{sec:thin} we prove the existence of thin independent sets in $\Gc_{N,d}$. The proof of Theorem~\ref{thm:main} is given in Section~\ref{sec:main_proof}, while the case of specific degrees is considered in Section~\ref{sec:specific}. 

\section{Star decompositions from thin independent sets} \label{sec:star_decomp}

We fix a degree $d \geq 3$ and an integer $k> d/2$. The main result of this section is Lemma~\ref{lem:sd} that provides a sufficient condition for the existence of $k$-star decompositions in (deterministic) $d$-regular graphs.

We will need the notion of \emph{in-regular orientation}. 
\begin{definition} \label{def:reg_orientation}
An \emph{in-regular orientation} of an undirected graph $H$ is an orientation with the property that each in-degree of the resulting directed graph is the same.

Note that if this in-degree is $\ell \in \Nb$, then $H$ needs to have average degree precisely $2 \ell$.
\end{definition}
Whether a graph has an in-regular orientation can be phrased as a flow problem, and 
(the integral version of) the max flow min cut theorem 
provides the following characterization for the existence. It is an immediate consequence of e.g.~\cite[Theorem 1]{frank1976orien}, where orientations with general degree bounds were studied.
\begin{proposition} \label{prop:reg_orientation}
Suppose that $H$ has average degree $2 \ell$ for some $\ell \in \Nb$; i.e., $e(H)=\ell |V(H)|$. Then $H$ has an in-regular orientation if and only if every induced subgraph of $H$ has average degree at most $2\ell$, that is, for every $U \subseteq V(H)$: 
\begin{equation} \label{eq:bo_cond1}
e[U] \leq \ell |U| .
\end{equation}
Note that an equivalent way of writing \eqref{eq:bo_cond1} in graphs with $e(H)=\ell |V(H)|$ is:
\begin{equation} \label{eq:bo_cond2}
e\big[ V(H) \sm U \big] + 
e\big[U,V(H) \sm U \big] 
\geq \ell \big| V(H) \sm U \big| .
\end{equation}
\end{proposition}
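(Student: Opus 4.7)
The plan is to prove the two implications separately and then note that \eqref{eq:bo_cond1} and \eqref{eq:bo_cond2} are equivalent by complementarity. The forward implication is immediate: given any in-regular orientation, every edge inside $U$ contributes $1$ to the in-degree of some vertex in $U$, so $e[U]$ is bounded by the total in-degree over $U$, which equals $\ell|U|$.

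For the converse, I would encode orientations as a bipartite matching. Let $B$ be the bipartite graph whose left side is $E(H)$ and whose right side consists of $\ell$ slots $(v,1),\ldots,(v,\ell)$ for each $v\in V(H)$; connect an edge $e=\{u,v\}$ of $H$ to every slot at $u$ and every slot at $v$. A perfect matching in $B$ encodes an in-regular orientation -- matching $e$ to a slot of $v$ means orienting $e$ toward $v$ -- and both sides of $B$ have size $\ell|V(H)|$ thanks to the hypothesis $e(H)=\ell|V(H)|$. Hall's condition on the left side is thus sufficient: for $F\subseteq E(H)$, letting $V(F)$ denote the set of vertices incident to $F$, the neighborhood $N(F)$ is exactly the set of all slots at vertices of $V(F)$, so $|N(F)|=\ell|V(F)|$. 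Since every edge of $F$ has both endpoints in $V(F)$, hypothesis \eqref{eq:bo_cond1} gives $|F|\leq e[V(F)]\leq\ell|V(F)|=|N(F)|$, and Hall's theorem then delivers the perfect matching, i.e.\ the sought orientation.

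The equivalence of \eqref{eq:bo_cond1} and \eqref{eq:bo_cond2} is just the identity $e[U]+e[V(H)\sm U]+e[U,V(H)\sm U]=e(H)=\ell|V(H)|$: subtracting $e[U]\leq\ell|U|$ from both sides yields \eqref{eq:bo_cond2}. There is no genuine obstacle here; the essential content of the proof is recognizing in-regular orientations as perfect matchings in $B$ (or equivalently as integer $s$-$t$ flows of value $e(H)$ in the network with source-to-edge capacities $1$, edge-to-endpoint capacities $1$, and vertex-to-sink capacities $\ell$), after which Hall's theorem -- or the integer max-flow min-cut theorem cited by the author -- finishes the job.
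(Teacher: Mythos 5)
Your proof is correct and is, in substance, the flow/matching argument the paper alludes to but does not spell out: where the paper simply remarks that the statement is an immediate consequence of integral max-flow min-cut and cites \cite[Theorem 1]{frank1976orien}, you make the reduction explicit by encoding in-regular orientations as perfect matchings in the bipartite graph between $E(H)$ and $\ell$ slots per vertex and verifying Hall's condition using \eqref{eq:bo_cond1}. Since Hall's theorem is a special case of the integral max-flow min-cut theorem, this is the same underlying mechanism, now written out self-containedly; the easy forward direction and the complementarity identity giving the equivalence of \eqref{eq:bo_cond1} and \eqref{eq:bo_cond2} are also handled correctly.
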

Next we state a simple observation that was already pointed out in the introduction.
\begin{observation} \label{obs:sd}
Let $G$ be a $d$-regular graph. For a given integer $k>d/2$, the following objects are clearly equivalent:
\begin{itemize}
\item a $k$-star decomposition of the edge set $E(G)$;
\item an orientation of $G$ in such a way that each out-degree is $0$ or $k$;
\item an independent set $A$ of density $\ds \al_{d,k} = 1 - \frac{d}{2k}$ along with a $(d-k)$-in-regular orientation of the remaining graph $G[A^\comp]$ (so that each in-degree is $d-k$).
\end{itemize}
Note that the condition that the density of $A$ is $\al_{d,k}$ implies that $\frac{d|V(G)|}{2k}$ is an integer, which is indeed necessary if we want to decompose $|E(G)|=\frac{d|V(G)|}{2}$ edges into $k$-stars.
\end{observation}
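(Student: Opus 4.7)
My plan is to establish each of the three equivalences by giving explicit constructions in both directions; no deep tools are needed. The single structural input throughout is the hypothesis $k>d/2$, which prevents a vertex from being the center of two distinct $k$-stars. The density claim and the divisibility remark at the end will then fall out of a one-line edge count.

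For the first equivalence, given a $k$-star decomposition of $E(G)$ I would orient each edge away from the center of the unique star containing it. Since two stars sharing a center would use $2k>d$ edges at that vertex, each vertex is the center of at most one star, and consequently its out-degree is exactly $k$ or $0$. Conversely, from any orientation with all out-degrees in $\{0,k\}$, the out-edges at each out-degree-$k$ vertex form a $k$-star, and these stars partition $E(G)$.

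For the equivalence with the independent set picture, start from an orientation of $G$ with out-degrees in $\{0,k\}$ and let $A$ denote the set of out-degree-$0$ vertices. Any edge inside $A$ would give some vertex of $A$ positive out-degree, so $A$ is independent. A single edge count, $k|A^\comp|=e(G)=d|V(G)|/2$, yields $|A|/|V(G)|=1-d/(2k)=\al_{d,k}$ and in particular forces $Nd/(2k)$ to be an integer. To see that $G[A^\comp]$ inherits a $(d-k)$-in-regular orientation, I would fix $v\in A^\comp$ and set $s \defeq |N(v)\cap A|$; all $s$ edges to $A$ must point out of $v$ (since $A$-vertices have no outgoing edges), leaving $v$ with out-degree $k-s$ and hence in-degree $(d-s)-(k-s)=d-k$ inside $G[A^\comp]$. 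Conversely, starting from an independent set $A$ of the prescribed density together with a $(d-k)$-in-regular orientation of $G[A^\comp]$, I would orient every edge between $A^\comp$ and $A$ toward $A$; the same bookkeeping then shows that each $A^\comp$-vertex has out-degree $k$ and each $A$-vertex has out-degree $0$.

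Because every step is a direct construction paired with a one-line edge count, this statement is really a dictionary rather than a genuine theorem, and I do not anticipate any true obstacle. The only point that merits a little care is making sure the out- and in-degree arithmetic inside $G[A^\comp]$ balances exactly — which is precisely where the density $\al_{d,k}=1-d/(2k)$ is pinned down by the requirement that each vertex of $A^\comp$ end up with in-degree $d-k$.
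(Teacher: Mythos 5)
Your proof is correct and is essentially the argument the paper itself sketches in the introduction (which only spells out the direction from a $k$-star decomposition to the orientation and independent set); you fill in the remaining directions by the same direct constructions plus the one-line edge count. The only implicit point worth flagging — that $|N(v)\cap A|\le k$ for each $v\in A^{\comp}$, needed so the out-degrees $k-s$ are nonnegative — is automatic in both directions: from the orientation because $v$ has total out-degree $k$, and from the $(d-k)$-in-regular orientation because $\deg_{G[A^\comp]}(v)=d-s\ge d-k$.
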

Clearly, such a $(d-k)$-in-regular orientation can exist only if $A$
has the property that every degree of $G[A^\comp]$ is at least $d-k$. In other words, every vertex outside $A$ can have at most $k$ neighbors in $A$. This motivates the following definition.
\begin{definition} \label{def:thin}
Given a positive integer $\dhat < d$, we say that an independent set $A \subset V(G)$ is \emph{$\dhat$-thin} if every vertex of $G$ has at most $\dhat$ neighbors in $A$:
\[ \forall v \notin A: \; e[v,A] \leq \dhat .\]
\end{definition}
Now we are ready to state our sufficient condition for the existence of a $k$-star decomposition. 
\begin{lemma} \label{lem:sd}
Let $G$ be a $d$-regular graph on $N$ vertices and let $k>d/2$. Let $\al_{d,k}$ be as in \eqref{eq:al_dk}. Suppose that $G$ has an independent set $A$ satisfying the following conditions for a positive integer $\dhat < k$ and a real number $0<c<1$.
\begin{enumerate}[(i)]
\item $A$ is $\dhat$-thin with density $\al_{d,k}$.
\item For any set $U \subseteq A^\comp = V(G) \sm A$ with $|U|\leq cN$ we have
\[ e[U] \leq (d-k) |U| .\]
\item For any set $W \subseteq A^\comp$ with $|W| < (1-\al_{d,k}-c)N$ we have 
\[ e[W] \leq (k-\dhat) |W| .\]
\end{enumerate}
Then $G$ has a $k$-star decomposition.
\end{lemma}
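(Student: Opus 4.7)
The plan is to reduce the existence of a $k$-star decomposition to the existence of a $(d-k)$-in-regular orientation of $H \defeq G[A^\comp]$ via Observation~\ref{obs:sd}, and then verify the max-flow/min-cut criterion of Proposition~\ref{prop:reg_orientation} for $H$. The average-degree equality $e(H) = (d-k)|V(H)|$ falls out immediately from the fact that $A$ is independent of density $\al_{d,k}$: all $d|A|$ edges incident to $A$ go to $A^\comp$, so $e[A^\comp] = Nd/2 - d\al_{d,k}N$, which simplifies to $(d-k)|A^\comp|$ from the definition of $\al_{d,k}$. So the real work is to establish $e[U] \leq (d-k)|U|$ for every $U \subseteq A^\comp$.

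My plan is to dichotomise on the size of $U$. When $|U| \leq cN$, assumption (ii) is precisely the desired bound. When $|U| > cN$, I would switch to the equivalent cut formulation \eqref{eq:bo_cond2} applied to the complementary set $W \defeq A^\comp \sm U$, for which $|W| < (1-\al_{d,k}-c)N$, making assumption (iii) applicable. What then has to be shown is $e[W] + e[U,W] \geq (d-k)|W|$.

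The step that fuses the two remaining hypotheses is a simple degree count exploiting $\dhat$-thinness. Every $w \in W$ lies outside $A$ and thus has at most $\dhat$ neighbors in $A$, leaving at least $d-\dhat$ neighbors inside $A^\comp = W \sqcup U$; summing over $w \in W$ yields $2 e[W] + e[U,W] \geq (d-\dhat)|W|$. Combining with the bound $e[W] \leq (k-\dhat)|W|$ from (iii),
\[ e[W] + e[U,W] \;\geq\; (d-\dhat)|W| - e[W] \;\geq\; (d-\dhat)|W| - (k-\dhat)|W| \;=\; (d-k)|W|, \]
as required. The resulting $(d-k)$-in-regular orientation of $H$ then produces the desired $k$-star decomposition via Observation~\ref{obs:sd}.

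I do not expect serious obstacles here: the proof is essentially a bookkeeping match between the three hypotheses and the criterion of Proposition~\ref{prop:reg_orientation}. The only subtlety I anticipate is at the small/large boundary — the strict vs.\ non-strict inequalities in (ii) and (iii) need to be lined up so that $|U|>cN$ translates exactly into $|W|<(1-\al_{d,k}-c)N$, which is exactly how the hypotheses are stated.
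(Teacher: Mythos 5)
Your proposal is correct and follows essentially the same route as the paper: invoke Observation~\ref{obs:sd} to reduce to a $(d-k)$-in-regular orientation of $G[A^\comp]$, then verify Proposition~\ref{prop:reg_orientation} by splitting on whether $|U|\leq cN$ (use (ii)) or $|U|>cN$ (pass to $W$, use (iii) together with $\dhat$-thinness to verify the cut form \eqref{eq:bo_cond2}). The degree-count identity $e[W]+e[W,U]=\sum_{w\in W}\deg_H(w)-e[W]$ is exactly how the paper fuses the two hypotheses, so there is nothing to add.
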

\begin{proof}
We use Proposition~\ref{prop:reg_orientation} with $\ell=d-k$ to show that $H \defeq G[A^\comp]$ has an $\ell$-in-regular orientation. This would prove the statement, according to Observation~\ref{obs:sd}. 
Let $U \sqcup W = A^\comp$ be any partition of $V(H)=A^\comp$ into two sets. We need to check condition \eqref{eq:bo_cond1}. This is immediate from (ii) for every set $U$ with $|U|\leq cN$. So we may assume that $|U| > cN$. Then  
\[ |W| = |A^\comp| - |U| < \big( 1-\al_{d,k} - c \big)N ,\]
and hence we can use (iii). Since $A$ is $\dhat$-thin, we have for any $v \in A^\comp$ that 
\[ \deg_H(v) = d - e[v,A] \geq d - \dhat .\]
Using this and (iii), we get 
\[ e[W] + e[W,U] = \left( \sum_{v \in W} \deg_H(v) \right) - e[W] 
\geq (d-\dhat) |W| - (k-\dhat) |W| = (d-k) |W| ,\]
meaning that \eqref{eq:bo_cond2} holds for $\ell=d-k$. Recall that \eqref{eq:bo_cond2} is just an equivalent formulation of \eqref{eq:bo_cond1}, so the condition of Proposition~\ref{prop:reg_orientation} holds for every $U$, and the proof is complete.
\end{proof}

The reason why this lemma will be useful in the context of random regular graphs is that (for large $d$) $\Gc_{N,d}$ a.a.s.~has an $\eps d$-thin independent set with density only slightly smaller than $\alstar_d$; see Lemma~\ref{lem:thin}(b). As we will see in Section~\ref{sec:main_proof}, this result implies that condition (i) of Lemma~\ref{lem:sd} is a.a.s.~satisfied for $k \leq \kind_d-1$. Conditions (ii) and (iii) will follow from a basic estimate on the average degree of induced subgraphs of $\Gc_{N,d}$. 

\begin{remark} \label{rm:not_divisible}
The condition that $k$ divides $e(G)=Nd/2$ is hidden in (i) because otherwise $\al_{d,k}N$ is not an integer and hence no subset of density exactly $\al_{d,k}$ may exist. If this divisibility condition is not satisfied, then we can still conclude that there exists a collection of edge-disjoint $k$-stars covering all but at most $k-1$ edges of $G$, which is the best we can hope for in this case. We simply need to replace condition (i) with the following variant:
\begin{enumerate}[(i')]
\item $A$ is $\dhat$-thin with size $\lceil \al_{d,k}N \rceil$.
\end{enumerate}
Indeed, in this case $H=G[A^\comp]$ has $\ell |V(H)| - r$ edges for some nonnegative integer $r \leq k-1$. Condition \eqref{eq:bo_cond2} still implies \eqref{eq:bo_cond1}, while \eqref{eq:bo_cond1} implies that $H$ has an orientation such that each in-degree is at most $\ell$ \cite[Theorem 1]{frank1976orien}. This orientation can be thought of as $|A^\comp|$ edge-disjoint stars, each with at least $k$ edges. Since $e(G)=k|A^\comp|+r$, this yields a collection of edge-disjoint $k$-stars in $G$ with only $r \leq k-1$ edges not covered.
\end{remark}

\section{Independent sets and counting} 
\label{sec:ind_set_counting}

In this section we review the known facts and techniques we will need in our proofs in subsequent sections. We start with a brief overview of the various results (exact formulas, upper bounds) regarding the independence ratio of random regular graphs.

\subsection{Bounds on the independence ratio}
\label{sec:ind_ratio}

As mentioned in the introduction, for each degree $d \geq 3$, there exists a constant $\alstar_d$ such that the independence ratio of the random $d$-regular graph $\Gc_{N,d}$ converges in probability to $\alstar_d$ as $N \to \infty$. This was proved in \cite{bayati2013interpolation}. 

The first upper bound on $\alstar_d$ was given by Bollob\'as \cite{bollobas1981independence}. We denote this bound by $\alfm_d$ and will refer to it as the \emph{first moment bound}. Let 
\begin{equation} \label{eq:phi}
\varphi_d(\al) \defeq h(\al) + \frac{d}{2} \, h(1-2\al) - (d-1) h(1-\al) 
\end{equation}
and let $\alfm_d$ be the unique root of $\varphi_d$ on $(0,1/2)$. Then $\alstar_d \leq \alfm_d$ for every $d \geq 3$. Asymptotically, as $d \to \infty$, we have $\alfm_d = \big(1 + o_d(1) \big) \frac{2\log d}{d}$ as $d \to \infty$. A more precise approximation can be found in \cite[formula (4)]{ding2016maximum}:
\begin{equation} \label{eq:alfm_approx}
\alfm_d = \frac{2}{d} \left( \log d - \log \log d + 1 - \log 2 
+ \Oc\left( \frac{\log \log d}{\log d} \right) \right) .
\end{equation}
A decade after Bollob\'as's bound, it was shown by {\L}uczak and Frieze \cite{frieze1992independence} that 
\begin{equation} \label{eq:alstar_lower_bound}
\alstar_d \geq \frac{2}{d} \bigg( \log d - \log \log d + 1 - \log 2 -o_d(1) \bigg) .
\end{equation}

So the first moment bound $\alfm_d$ is asymptotically optimal. However, it is not sharp for any given $d$. The reason behind this is that independent sets form clusters. Such a cluster may contain a large number of independent sets that differ little from each other. This leads to the phenomenon that, although most $d$-regular graphs do not contain independent sets of density $\al=\alstar_d+\eps$, a relatively small number of $d$-regular graphs contain exponentially many, causing the expected number to be above $1$, and hence the first moment bound fails to be sharp. These clusters can be described by the so-called \emph{frozen configurations}. In their breakthrough paper \cite{ding2016maximum} Ding, Sun, and Sly determined the expected number of such frozen configurations for large $d$ and obtained the improved upper bound $\alfc_d$. In fact, they proved that 
\begin{equation} \label{eq:dss}
 \alstar_d=\alfc_d 
\quad  \text{for sufficiently large } d .
\end{equation}
See \cite[Theorem 1 and formulas (1) and (2)]{ding2016maximum} for the exact result and the actual definition of $\alfc_d$. The improvement compared to the first moment bound is as follows (see \cite[Theorem 3.1]{ding2016maximum}):
\begin{equation} \label{eq:alfc_vs_alfm}
\alfc_d = \alfm_d - \left( \frac{2}{e} \, \frac{\log d}{d} \right)^2
\left( 1 + \Oc \left( \frac{\log \log d}{\log d} \right) \right) .
\end{equation}

Although not needed in this paper, we say a few words about another approach---rooted in statistical physics---to determining $\alstar_d$. There is a non-rigorous technique called the \emph{cavity method} that was already used in \cite{barbier2013hardcore} to predict the Ding--Sly--Sun result. More precisely, a 1-RSB (1-step replica symmetric breaking) formula $\alrsb_d$ was obtained through the cavity method, and it was conjectured that $\alstar_d=\alrsb_d$ for every $d \geq 20$. (This is still widely expected to hold true.) In fact, there is a rigorous technique called the \emph{interpolation method} that proves one direction (see \cite{lelarge2018replica,harangi2023rsb} for details):
\[ \alstar_d \leq \alrsb_d  
\quad \text{for every } d \geq 3 .\]
There is a phase transition below $d=20$, and a so-called full-RSB picture is expected in that range; see \cite{harangi2023rsb} for improved RSB bounds for $d<20$.

As for concrete (implicit) formulas, see \cite[Formula (1)]{harangi2023rsb} for the ``replica symmetric bound'' $\alrs_d$ and \cite[Formula (2)]{harangi2023rsb} for the 1-RSB bound $\alrsb_d$. These formulas---although they come from a very different approach---can be shown to be the same as $\alfm_d$ and $\alfc_d$: 
\[ \alfm_d = \alrs_d \; (\forall d \geq 3) 
\quad \text{and} \quad
\alfc_d = \alrsb_d \; (\forall d \geq 20) .\]

\subsection{First moment method in random regular graphs}
\label{sec:first_moment}

Recall that $\Gc_{N,d}$ stands for a random $d$-regular graph on $N$ vertices, that is, a uniform random graph among all $d$-regular simple graphs on the vertex set $\{1,\ldots,N\}$. 

Next we review the connection of $\Gc_{N,d}$ to the so-called \emph{configuration model}. Given $N$ vertices, each with $d$ ``half-edges'', the configuration model picks a random matching/pairing of these $Nd$ half-edges, resulting in $Nd/2$ edges. We denote the corresponding random graph as $\Gb_{N,d}$. Note that $\Gb_{N,d}$ is $d$-regular but it may have loops and multiple edges. A well-known fact is that if $\Gb_{N,d}$ is conditioned to be simple, then we get back $\Gc_{N,d}$. Moreover, for any $d$, the probability that $\Gb_{N,d}$ is simple converges to a positive $p_d$ as $N \to \infty$. It follows that if $\Gb_{N,d}$ a.a.s.~has a certain property, then so does $\Gc_{N,d}$.

Suppose that we want to show that $\Gc_{N,d}$ a.a.s.~does not contain a certain object, that is, $\Gc_{N,d}$ contains this object with probability $o_N(1)$. This probability is often exponentially small in $N$, and in many cases this can be proved by a standard first moment argument.

Let $Z_N$ denote the number of certain objects in $\Gb_{N,d}$. For instance, the \emph{object} may be an independent set of a given (approximate) density $\al$. The expectation $\Eb Z_N$ often grows or decays exponentially in $N$. By the exponential rate of growth we mean 
\[ \lim_{N \to \infty} \frac{\log(\Eb Z_N)}{N} .\]
If this limit exists, it may be regarded as the \emph{entropy of the object}. When negative, we can conclude that $\Eb Z_N$ is exponentially small in $N$, and hence so is $\Pb(Z_N>0) \leq \Eb Z_N$. Consequently, $\Gb_{N,d}$, and hence $\Gc_{N,d}$,  a.a.s.~does not contain such an object.

Actually, for a general class of objects, the limit can be expressed using Shannon entropy. Recall that the Shannon entropy of a discrete distribution $\mu$ (over some finite set $S$) is 
\[ H(\mu) \defeq \sum_{s \in S} h\big( \mu( \{s\} ) \big) ,\]
where\footnote{Throughout the paper $\log$ means natural logarithm.} 
\[ h(x) = 
\begin{cases} 
-x \log x  & \text{if } x\in (0,1]; \\
0 & \text{if } x=0 .
\end{cases} \]

For our purposes, the following simple setting will be sufficient. Suppose that we have finitely many vertex labels, and our object is a vertex-labeling with constraints describing which pairs of labels (and with what frequency) we may see on an edge. In essence, if $Z_N$ is the number of vertex-labelings of $\Gb_{N,d}$ with a prescribed (approximate) vertex and edge distribution ($\muvertex$ and $\muedge$, respectively), then 
\begin{equation} \label{eq:lim_log_Z}
\lim_{N \to \infty} \frac{\log(\Eb Z_N)}{N} = 
\frac{d}{2}H(\muedge) - (d-1) H(\muvertex) .
\end{equation}
More precisely, suppose that $G$ is a (deterministic) $d$-regular graph with a vertex labeling $\ell: V(G) \to L$ for some finite set $L$ of labels. The label $\ell(v)$ of a uniform random vertex $v$ of $G$ has a discrete distribution $\mu^{G,\ell}_\textrm{vertex}$ on $L$. Similarly, if we take a uniform random directed edge (i.e., an ordered pair $(u,v)$ of neighboring vertices), then the (joint) distribution of $\big( \ell(u),\ell(v) \big)$ is a discrete distribution $\mu^{G,\ell}_\textrm{edge}$ on $L \times L$. 
Note that both marginals of $\mu^{G,\ell}_\textrm{edge}$ are equal to $\mu^{G,\ell}_\textrm{vertex}$. We want to consider labelings for which $\mu^{G,\ell}_\textrm{edge}$ is close to some prescribed distribution $\mu_\textrm{edge}$ on $L \times L$. By ``close'' we mean that, say, their total variation distance is less than $\delta_N$ for some positive sequence $\delta_N \gg 1/N $ converging to $0$, such as $\delta_N=N^{-1/2}$ or $\delta_N = 1/\log N$. (Then the same is automatically true for their marginals $\mu^{G,\ell}_\textrm{vertex}$ and $\mu_\textrm{vertex}$.)

Suppose that $\mu_\textrm{edge}$ is a disrtibution on $L \times L$ described by the probabilities $p_{ij} \; (i,j \in L)$ such that $p_{ij}=p_{ji}$. Then the marginal distribution $\mu_\textrm{vertex}$ has probabilities $p_i \defeq \sum_{j \in L} p_{ij}$ for $i \in L$. We define the random variable $Z_N$ as the number of labelings $\ell$ on $\Gb=\Gb_{N,d}$ such that $\dist_\textrm{TV}\big( \mu^{\Gb,\ell}_\textrm{edge} , \mu_\textrm{edge} \big)<\delta_N$. Then standard counting arguments show that 
\begin{equation} \label{eq:EZ_N}
\Eb Z_N = N^{\Oc(1)} \, \exp\left( N \left( \frac{d}{2}H(\muedge) - (d-1) H(\muvertex) \right) \right) ,
\end{equation}
and \eqref{eq:lim_log_Z} follows. For a rigorous proof see \cite[Lemma 4.1 and the proof of Theorem 4]{backhausz2018largegirth}.

For a specific example, let us consider independent sets $A$ of density $\al$. Such an $A$ can be described by a $\{0,1\}$-labeling with vertex distribution 
\begin{align*}
p_0 &= \; \al; \\
p_1 &= \; 1-\al,
\end{align*} 
and edge distribution 
\begin{align*}
p_{00} &= \; 0; \\
p_{01} = p_{10} &= \; \al; \\
p_{11} &= \; 1-2\al .
\end{align*} 
Then the entropy of this object, as in \eqref{eq:lim_log_Z}, is the following:
\begin{align*}
\varphi_d(\al) &= \frac{d}{2} \bigg( 2 h(\al) + h(1-2\al) \bigg) - (d-1)\bigg( h(\al) + h(1-\al) \bigg) \\
&= h(\al) + \frac{d}{2} \, h(1-2\al) - (d-1) h(1-\al) .
\end{align*}
It follows that if $\varphi(\al)<0$, then the independence ratio of $\Gc_{N,d}$ is  a.a.s.~less than $\al$. Note that this is the same function that we introduced in Section~\ref{sec:ind_ratio}. Recall that we defined $\alfm_d$ as the unique root of $\varphi_d$ on $(0,1/2)$ so that $\varphi_d(\al)<0$ for any $\al>\alfm_d$. This proves the first moment bound $\alstar_d \leq \alfm_d$, due to Bollobás \cite{bollobas1981independence}, mentioned in Section~\ref{sec:ind_ratio}.

\section{Thin independent sets} 
\label{sec:thin}

In this section we prove the a.a.s.~existence of $\dhat$-thin independent sets in random $d$-regular graphs, where $\dhat=\tau d$ for some positive constant $\tau$. To this end, given an independent set $A$, we need to bound the number of vertices with many neighbors in $A$. Our exact result is the following.

\begin{lemma} \label{lem:thin}
Fix a constant $0<\tau \leq 1$ and set $\dhat=\lceil \tau d \rceil$. Furthermore, let 
\begin{equation} \label{eq:beta_choice}
\be_d \defeq \left( \frac{\log d}{d}\right)^3 .
\end{equation}
Then for sufficiently large $d \geq d_0(\tau)$ the following hold a.a.s.~for $\Gc_{N,d}$.
\begin{enumerate}[(a)]
\item Suppose that $A$ is an independent set of $\Gc_{N,d}$ with density $\al$ in the range 
\begin{equation} \label{eq:al_range}
\alfm_d - \left( \frac{\log d}{d} \right)^2 
< \al < \alfm_d .
\end{equation}
Then there are at most $\be_d N$ vertices $v \notin A$ such that 
$e[v,A] \geq \dhat$.
\item There exists a $\dhat$-thin independent set in $\Gc_{N,d}$ with density at least 
\[ 
\alstar_d - \frac{(\log d)^3}{d^2}. \]
Note that the bounds in the statements do not depend on $\tau$ but the threshold $d_0(\tau)$ does.
\end{enumerate}
\end{lemma}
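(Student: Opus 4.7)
The plan is to prove (a) by a first moment calculation in the configuration model, and then deduce (b) by a simple cleanup argument applied to a near-maximum independent set.

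For (a), I would fix an admissible density $\al$ (i.e., $\al N\in\Zb$) in the range \eqref{eq:al_range} and introduce the random variable $Z_N(\al)$ that counts triples $(A,B,\{S_v\}_{v\in B})$ in $\Gb_{N,d}$, where $A$ is an independent set of size $\al N$, $B\subseteq A^\comp$ has size $\lceil\be_d N\rceil$, and for each $v\in B$, $S_v$ is a set of $\dhat$ half-edges at $v$ that are matched to $A$-half-edges. If any independent set of density $\al$ has more than $\be_d N$ bad vertices then $Z_N(\al)\ge 1$, so it will suffice to bound $\Eb Z_N(\al)$ and apply Markov together with a union bound over $\al$. The natural starting inequality is
\[
  \Eb Z_N(\al)\le\binom{N}{\al N}\binom{(1-\al)N}{\be_d N}\binom{d}{\dhat}^{\be_d N}\cdot\Pb\bigl[A\text{ indep and each }S_v\to A\bigr],
\]
and I would compute the pairing probability in the configuration model: conditional on $A$ being independent, each of the $\tau\be_d dN$ designated half-edges lands in $A$ independently (to leading order) with probability $\al/(1-\al)$, so the probability factors as $\Pb[A\text{ indep}]\cdot(\al/(1-\al))^{\tau\be_d dN}(1+o(1))$.

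Combining Stirling with the formula underlying \eqref{eq:EZ_N} should then yield
\[
  \frac{1}{N}\log\Eb Z_N(\al)\le\varphi_d(\al)+h(\be_d)+\be_d d\bigl(h(\tau)+h(1-\tau)\bigr)+\tau\be_d d\log\tfrac{\al}{1-\al}+o_N(1).
\]
A direct calculation via $h'(x)=-\log x-1$ gives $\varphi_d'(\alfm_d)=-\log d+O(\log\log d)$, whence $\varphi_d(\al)=O\bigl((\log d)^3/d^2\bigr)$ on the range \eqref{eq:al_range}. With $\be_d=(\log d/d)^3$, the remaining positive contributions satisfy $h(\be_d)=O\bigl((\log d)^4/d^3\bigr)$ and $\be_d d(h(\tau)+h(1-\tau))=O\bigl((\log d)^3/d^2\bigr)$, while the negative term equals $-(1+o_d(1))\tau(\log d)^4/d^2$ since $\al\sim 2\log d/d$. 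For $d$ large enough (depending on $\tau$), the negative term beats every positive term by a factor of $\log d$, so $\log\Eb Z_N(\al)\le-c_\tau N(\log d)^4/d^2$. A union bound over the $O(N)$ admissible values of $\al$ and the standard transfer from $\Gb_{N,d}$ to $\Gc_{N,d}$ via $\Pb[\Gb_{N,d}\text{ is simple}]\to p_d>0$ then complete (a).

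For (b), I would take $A$ to be a maximum independent set of $\Gc_{N,d}$, so $|A|/N=\alstar_d+o_\Pb(1)$. By \eqref{eq:alfc_vs_alfm}, $\alstar_d$ lies strictly inside $(\alfm_d-(\log d/d)^2,\alfm_d)$ for large $d$, hence $|A|/N$ a.a.s.\ lies in \eqref{eq:al_range} and part (a) applies. Setting $B\defeq\{v\in A^\comp:e[v,A]\ge\dhat\}$, part (a) gives $|B|\le\be_d N$ a.a.s. Define $A'\defeq A\setminus N(B)$, where $N(B)$ is the neighborhood of $B$ in $\Gc_{N,d}$. Then $A'\subseteq A$ is independent; for $v\in A^\comp$, if $v\in B$ every neighbor of $v$ in $A$ has been removed so $e[v,A']=0$, and if $v\notin B$ then $e[v,A']\le e[v,A]\le\dhat-1$, so $A'$ is $\dhat$-thin. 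The density loss satisfies $|A\setminus A'|\le d|B|\le d\be_d N=(\log d)^3 N/d^2$, giving $|A'|/N\ge\alstar_d-(\log d)^3/d^2-o_N(1)$ a.a.s., which matches the stated bound up to the standard $o_N(1)$ inherent to a.a.s.~statements.

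The main obstacle will be the entropy estimate in (a): the exponential saving from forcing $\tau\be_d dN$ designated half-edges into $A$ must strictly exceed the residual entropy $\varphi_d(\al)$ available on the range \eqref{eq:al_range}. The choice $\be_d=(\log d/d)^3$ is precisely calibrated so that the saving of order $(\log d)^4/d^2$ beats the worst case $\varphi_d(\al)=O((\log d)^3/d^2)$ by a factor of $\log d$. Linearizing the per-vertex constraint $e[v,A]\ge\dhat$ via the auxiliary choice of $\dhat$ designated half-edges per $v\in B$, at the cost of a manageable $\binom{d}{\dhat}^{|B|}$ overcount, is the key combinatorial device that keeps the calculation tractable.
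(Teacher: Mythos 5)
Your proposal is correct and takes essentially the same route as the paper: for part~(a) a first-moment bound in the configuration model on pairs $(A,B)$ where every $v\in B$ has $\geq\dhat$ neighbours in $A$, with the same entropy bookkeeping ($\varphi_d(\al)=O((\log d)^3/d^2)$ on the range~\eqref{eq:al_range}, the $h(\be_d)$ and $\be_d d$ terms of lower order, and the term $\tau\be_d d\log\al\asymp -\tau(\log d)^4/d^2$ dominating), and for part~(b) the removal of a sublinear set of vertices from a near-optimal independent set. The only cosmetic differences are that the paper organises the first moment through the function $\phihat_d$ and the vertex-labelling framework of Section~\ref{sec:first_moment} (encoding the aggregate constraint $e[B,A]=\tau d|B|$) instead of your per-vertex ``designated half-edge'' overcount, and for part~(b) it starts from an independent set of a fixed density $\alfc_d-\eps$ via the Ding--Sly--Sun theorem rather than a maximum one; both variants work.
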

The proof considers the expected number of the following objects in $\Gc_{N.d}$: an independent set $A$ of density $\al$ along with a set $B \subset A^\comp$ of density $\be$ such that $e[B,A]=\tau d|B|$, that is, a vertex in $B$ has $\tau d$ neighbors in $A$ on average. As we will see in Lemma~\ref{lem:phihat}, the exponential rate of this expectation is given by the following function.
\begin{definition} \label{def:phihat}
For 
$\al \in [0,1/2]$, 
$\be \in [0,1-2\al]$ and 
$\tau \in [0,1]$ let 
\begin{multline*}
\phihat_d(\al,\be,\tau) \defeq 
\frac{d}{2} \bigg( 2h(\be)+ 2\be \big( h(\tau)+h(1-\tau) \big)  
+ 2h(\al-\tau \be)
+ 2h(1-2\al-(1-\tau)\be) - h(1-2\al) \bigg) \\
- (d-1) \bigg(  h(\al)+h(\be)+h(1-\al-\be) \bigg) .
\end{multline*}
Note that for $\be=0$ we get back $\varphi_d$:
\[ \phihat_d(\al,0,\tau) = \varphi_d(\al) 
\quad \text{for every } \tau .\]
\end{definition}
\begin{lemma} \label{lem:phihat}
Suppose that $\phihat_d(\al,\be,\tau)<0$. Then $\Gc_{N,d}$ a.a.s.~does not contain a pair $A,B$, where $A$ is an independent set of density $\al+o(1)$, $B \subset A^c$ has density $\be+o(1)$, and $e[B,A] = \tau d |B| + o(N)$. 
\end{lemma}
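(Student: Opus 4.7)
The plan is to apply the first-moment method directly in the configuration model $\Gb_{N,d}$ and then transfer the conclusion to $\Gc_{N,d}$ via the fact that $\Pb(\Gb_{N,d}\text{ is simple})\to p_d>0$, exactly in the framework of Section~\ref{sec:first_moment}. By discretizing the $(\alpha',\beta',\tau')$-parameter space around the target point into polynomially many cells of diameter $o(1)$, applying a union bound and invoking the continuity of $\hat\varphi_d$, it suffices to prove for each fixed admissible triple $(\alpha,\beta,\tau)$ that $\tfrac{1}{N}\log\Eb Z_N \to \hat\varphi_d(\alpha,\beta,\tau)$, where $Z_N$ counts the pairs $(A,B)$ in $\Gb_{N,d}$ with $|A|=\alpha N$, $|B|=\beta N$, $e[A,A]=0$, and $e[B,A]=\tau d\beta N$ (whenever these numbers are integers).

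The computation of $\Eb Z_N$ will proceed in two stages. First, the number of labelled partitions $V = A \sqcup B \sqcup C$ with the prescribed densities is the multinomial $\binom{N}{\alpha N,\,\beta N,\,(1-\alpha-\beta)N}$. Second, for a fixed partition, the constrained half-edge pairings are counted by: selecting the $\tau d\beta N$ half-edges of $B$ and the $d(\alpha-\tau\beta)N$ half-edges of $C$ that are paired with $A$ (two binomial factors), pairing $A$'s $d\alpha N$ half-edges with the selected outside half-edges in $(d\alpha N)!$ ways, and forming a perfect matching on the remaining $dN(1-2\alpha)$ half-edges within $B\cup C$. Dividing by the total number $(dN)!/((dN/2)!\,2^{dN/2})$ of pairings and applying Stirling, all $\log(dN)$ and linear-in-$N$ terms cancel, leaving the exponential rate
\[
[h(\alpha)+h(\beta)+h(1-\alpha-\beta)] - d\,h(\alpha) + d\beta[h(\tau)+h(1-\tau)] + d[h(\alpha-\tau\beta)+h(1-2\alpha-(1-\tau)\beta)-h(1-\alpha-\beta)] - \tfrac{d}{2}h(1-2\alpha),
\]
which regroups term by term into $\hat\varphi_d(\alpha,\beta,\tau)$ of Definition~\ref{def:phihat}. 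The summand $-\tfrac{d}{2}h(1-2\alpha)$ is the signature of the internal matching factor on $B\cup C$ (modulo the denominator), while the $-(d-1)$ coefficient in front of the vertex entropy arises from combining the multinomial with the $\log(dN)$ contributions coming from the factorial and matching counts.

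Given this limit, the hypothesis $\hat\varphi_d(\alpha,\beta,\tau)<0$ combined with Markov's inequality yields $\Pb(Z_N>0)\le\Eb Z_N = e^{-\Omega(N)}$ in $\Gb_{N,d}$, and the passage to $\Gc_{N,d}$ finishes the argument. There is no genuine conceptual obstacle: the proof is a routine configuration-model first-moment calculation. The only care needed is in (i) the Stirling bookkeeping, where one must check that the several $\log(dN)$ coefficients actually cancel using identities like $\alpha+\tfrac{1-2\alpha}{2}=\tfrac{1}{2}$ and $(\alpha-\tau\beta)+(1-2\alpha-(1-\tau)\beta)=1-\alpha-\beta$, and (ii) extending the pointwise estimate to the $o(1)$-tolerance window in the statement via the standard discretization over polynomially many cells.
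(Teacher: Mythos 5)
Your proposal is correct, and it does take a genuinely different route from the paper's proof. The paper invokes the general first-moment framework of Section~\ref{sec:first_moment} (quoting formula \eqref{eq:EZ_N} from \cite{backhausz2018largegirth}): it introduces a three-label edge distribution $\mu_\textrm{edge}$ with one free parameter (the split of the non-$A$ mass among $p_{11},p_{12},p_{22}$), proves that the independent coupling maximizes $H(\mu_\textrm{edge})$ via a concavity estimate, obtains $\tfrac{d}{2}H(\muedge)-(d-1)H(\muvertex)\le\phihat_d$, and then sums over the $\Oc(N)$ admissible choices of $\mu_\textrm{edge}$. You instead compute $\Eb Z_N$ directly in the configuration model: fix the tripartition, choose which $B$- and $C$-half-edges mate with $A$, bijectively pair them with $A$'s $d\alpha N$ half-edges, and take a free perfect matching on the remaining $d(1-2\alpha)N$ half-edges. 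This free matching already integrates out the one-parameter family, so no maximization argument or union over edge distributions is needed; the Stirling bookkeeping you describe does land exactly on $\phihat_d$ (I checked: the $\log(dN)$ terms cancel via $\alpha + \tfrac{1-2\alpha}{2} = \tfrac12$ and $(\alpha-\tau\beta)+(1-2\alpha-(1-\tau)\beta)=1-\alpha-\beta$, and your displayed expression regroups to Definition~\ref{def:phihat} once one collects the $h(\alpha)$, $h(\beta)$, $h(1-\alpha-\beta)$ terms). What the paper's route buys is modularity — it reuses a cited lemma and the same template as the independence-ratio bound — while your route is more elementary and self-contained, and it gives the sharp asymptotic rate rather than only an upper bound, at the cost of explicit Stirling work. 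The discretization step you mention for handling the $o(1)$ tolerance windows is also what is implicitly done in the cited lemma, so nothing is missing.
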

\begin{proof}
We use the method outlined in Section~\ref{sec:first_moment}. Labeling the vertices in $A$, $B$, $(A \cup B)^\comp$ with $0,1,2$, respectively, we get the following vertex distribution $\mu_\mathrm{vertex}$:
\begin{align*}
p_{0} &= \; \al; \\
p_{1} &= \; \be; \\
p_{2} &= \; 1-\al-\be .
\end{align*} 
Its Shannon entropy is 
\[ H(\mu_\mathrm{vertex}) = h(\al)+h(\be)+h(1-\al-\be) . \]
The edge distribution is not completely determined by our constraints. The probabilities we know for sure are the following:
\begin{align*}
p_{00} &= \; 0; \\
p_{01} = p_{10} &= \; \tau \be ; \\
p_{02} = p_{20} &= \; \al-\tau \be .
\end{align*} 
The rest of the distribution has one degree of freedom: $p_{11}$, $p_{22}$, $p_{12}=p_{21}$ can be chosen in such a way that 
\[ p_{11}+p_{12}=p_1-p_{10}=(1-\tau)\be 
\quad \text{and} \quad 
p_{11}+p_{12}+p_{21}+p_{22}=1-2\al .\]
A well-known fact is that the ``independent coupling'' gives the largest entropy in such situations. More precisely, if 
\[ p_{11}+p_{12}=a_1 \; \text{ and } \;
p_{21}+p_{22}=a_2 \; \text{ with } \;
p_{12}=p_{21} , \]
then 
\[ h(p_{11})+h(p_{12})+h(p_{21})+h(p_{22}) \leq 
2h(a_1)+2h(a_2)-h(a_1+a_2) ,\]
with equality for $p_{ij}=a_i a_j/A$, where $i,j \in \{1,2\}$ and $A=a_1+a_2$. (Indeed, since $\log$ is a concave function, we have 
\[ h(A) + \sum_{i,j}h(p_{ij})-2\sum_i h(a_i) 
=  \sum_{i,j} p_{ij} \log\left( \frac{a_i a_j}{p_{ij}A} \right) 
\geq A \log\left( \sum_{i,j} \frac{a_i a_j}{A^2} \right) = 0 ,\]
where we used that $\sum p_{ij}=A$ and $\sum a_i a_j=A^2$.)

In our setting we have $a_1=(1-\tau)\be$ and $A=a_1+a_2=1-2\al$. Therefore, 
\begin{align*} 
H(\mu_\mathrm{edge}) &= \sum_{0 \leq i,j \leq 2} h(p_{ij}) \\
&\leq 2h(\tau \be) + 2h(\al-\tau \be) + 2h((1-\tau)\be) 
+ 2h(1-2\al-(1-\tau)\be) - h(1-2\al) .
\end{align*} 
Since 
\[ h(\tau \be)+h((1-\tau) \be)
= h(\be)+ \be \big( h(\tau)+h(1-\tau) \big) ,\]
we can write 
\[ H(\mu_\mathrm{edge}) \leq 
2h(\be)+ 2\be \big( h(\tau)+h(1-\tau) \big)  
+ 2h(\al-\tau \be)
+ 2h(1-2\al-(1-\tau)\be) - h(1-2\al) .\]
We conclude that 
\[ \frac{d}{2} H(\mu_\mathrm{edge}) 
- (d-1) H(\mu_\mathrm{vertex}) \leq 
\phihat_d(\al,\be,\tau) .\]
Recall that $\mu_\mathrm{edge}$ has one degree of freedom. Suppose that $\eta \defeq \phihat_d(\al,\be,\tau)<0$. According to Section~\ref{sec:first_moment} (and \eqref{eq:EZ_N} in particular) it follows that the expected number of labelings is at most $N^{\Oc(1)} \exp(N \eta)$ for any possible $\mu_\mathrm{edge}$. Since the number of terms is $\Oc(N)$, their sum is still exponentially small, and the proof is complete.
\end{proof}

\begin{proof}[Proof of Lemma~\ref{lem:thin}]
The main step is showing that $\phihat_d(\al,\be_d,\tau)<0$. Heuristically, one expects $\phihat_d(\al,\be_d,\tau)$ to be somewhat smaller than $\varphi_d(\al)$, which, in turn, is only slightly above $\varphi_d(\alfm_d)=0$ provided that $\al$ is close to the first moment bound $\alfm_d$. The precise estimates are as follows.

First note that $h'(x)=-\log(x)-1 \geq -1 $ is a monotone decreasing function on $(0,1)$. It follows that 
\[ (b-a) h'(a) \geq h(b)-h(a) \geq (b-a) h'(b) \geq -(b-a)   
\quad \text{for any } 0<a \leq b \leq 1 .\]
This observation yields the following inequalities:
\begin{align*}
h(1-\al-\be) &\geq h(1-\al)+\be \, h'(1-\al-\be) ;\\
h(1-2\al-(1-\tau)\be) &\leq h(1-2\al) + (1-\tau)\be ;\\
h(\al-\tau \be) &\leq h(\al) - \tau \be h'(\al) 
= h(\al) + \tau \be (\log(\al)+1) .
\end{align*}
Then, if $0<\tau \leq 1$ is a constant (not depending on $d$), we get  
\begin{equation} \label{eq:phihat_approx}
\phihat_d(\al,\be,\tau) \leq \varphi_d(\al) + 
h(\be) + \tau\be d \log(\al) + \Oc(\be d) .
\end{equation}
Next we bound $\varphi_d$ near $\alfm_d$. Recall that $\varphi_d(\alfm_d)=0$ by definition. Differentiating \eqref{eq:phi} gives  
\begin{equation*} 
- \varphi'_d(\al) = 
\log(\al) - d \log(1-2\al) + (d-1)\log(1-\al) 
\leq - d \log(1-2\al) \leq 2d \al .
\end{equation*}
Since $\al < \alfm_d < (2 \log d)/d$, it follows that $-\varphi'_d(\al) < 4 \log d$. (In fact, $-\varphi'_d(\al) \sim \log d$.) We conclude that for any $\al$ in the range \eqref{eq:al_range} we have 
\begin{equation} \label{eq:phi_bnd}
\varphi_d(\al) 
= \varphi_d(\al) - \underbrace{\varphi_d(\alfm_d)}_{=0} 
= -\int_{\al}^{\alfm_d} \varphi'_d  
< (\alfm_d - \al) 4\log d 
< \frac{4(\log d)^3}{d^2} .
\end{equation}
Furthermore, $\al < (2 \log d)/d$ gives 
\begin{equation} \label{eq:log_al} 
\log(\al)<  \log(2) +\log \log d -\log d < - \frac{\log d}{2} .
\end{equation}
Setting $\be=\be_d$ as in \eqref{eq:beta_choice} and using \eqref{eq:phihat_approx}, \eqref{eq:phi_bnd}, \eqref{eq:log_al}
we get 
\[ \phihat_d(\al,\be_d,\tau) \leq 
\frac{4(\log d)^3}{d^2} 
+ 3\frac{(\log d)^4}{d^3}  
- \frac{\tau}{2} \, \frac{(\log d)^4}{d^2} 
+ \Oc\left( \frac{(\log d)^3}{d^2} \right) < 0 \]
for sufficiently large $d$. 

Then, by Lemma~\ref{lem:phihat}, we conclude that for any independent set $A$ with density $\al$ in the range \eqref{eq:al_range}, there must be at most $\be_d N$ vertices with at least $\dhat=\lceil \tau d \rceil$ neighbors in $A$, and the proof of part (a) is complete.

For (b), we first note that, due to \eqref{eq:alfc_vs_alfm}, $\al=\alfc_d$ falls into the range \eqref{eq:al_range} for large enough $d$. Then the same is true for $\al \defeq \alfc_d-\eps$ if $\eps>0$ is small enough.

According to the Ding--Sly--Sun result \eqref{eq:dss}, 
we have $\alstar_d=\alfc_d$ for sufficiently large $d$. So $\Gc_{N,d}$ a.a.s.~has an independent set $A$ of density $\al<\alfc_d=\alstar_d$. Take such an $A$, and for any vertex $v \notin A$ with $e[v,A] \geq \dhat+1$, remove $e[v,A] - \dhat$ neighbors of $v$ from $A$. For each $v$, we remove at most $d-\dhat \leq d - \tau d = (1-\tau)d$ vertices. By part (a), the number of $v$'s is at most $\be_d N$. In total, we removed at most 
\[ (1-\tau)d\be_d N \leq d\be_d N = \frac{(\log d)^3}{d^2} N \]
vertices. We claim that the remaining independent set $A'$ is $\dhat$-thin. On the one hand, $e[u,A']=0$ for any removed vertex $u \in A \sm A'$. On the other hand, for $v \notin A$, either $e[v,A'] \leq e[v,A] \leq \dhat$ in the first place, or if $e[v,A] > \dhat$, then after removals we have $e[v,A'] \leq e[v,A] - \big( e[v,A] - \dhat \big) = \dhat$. Taking $\eps \to 0$, we get a $\dhat$-thin independent set $A'$ with the claimed density.
\end{proof}
Although not needed for our main result Theorem~\ref{thm:main}, we include here a lemma that we will use in Section~\ref{sec:specific}, where specific degrees are considered.
\begin{lemma} \label{lem:thin_plus}
Let $d$ and $\al$ be such that $\Gc_{N,d}$ a.a.s.~contains an independent set of density $\al$. Suppose that the following condition is satisfied for some integers $k>\dhat$: 
\[ \text{for } \tau_+ \defeq \frac{\dhat+1}{d} 
\text{ and } 
\bemax \defeq \inf \big\{ \be>0 \, : \, \phihat_d(\al,\be,\tau_+) < 0 \big\} \]
it holds that $(d-\dhat)\bemax < \al-\al_{d,k}$, 
or the following weaker condition holds: 
\begin{equation} \label{eq:best_condition}
 (\tau d - \dhat)\be < \al-\al_{d,k} 
\quad \text{for any $\be\leq \bemax$ and $\tau \geq \tau_+$ with } 
\phihat_d(\al,\be,\tau) \geq 0 .
\end{equation}
Then $\Gc_{N,d}$ a.a.s.~contains a $\dhat$-thin independent set of density $\al_{d,k}$. 
\end{lemma}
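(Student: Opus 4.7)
The plan is to mirror the proof of Lemma~\ref{lem:thin}(b) with sharper bookkeeping. By the hypothesis, $\Gc_{N,d}$ a.a.s.~contains an independent set $A$ of density $\al$; fix such an $A$. For every bad vertex $v \notin A$ with $e[v, A] \geq \dhat + 1$, remove $e[v, A] - \dhat$ of its neighbors in $A$. The resulting set $A'$ is a $\dhat$-thin independent set by construction (just as in Lemma~\ref{lem:thin}(b)), so it suffices to show that a.a.s.~the number of removed vertices is at most $(\al - \al_{d,k})N$, yielding $|A'| \geq \al_{d,k} N$.

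Let $B = \{v \notin A : e[v,A] \geq \dhat+1\}$, set $\be \defeq |B|/N$ and $\tau \defeq e[B,A]/(d|B|)$. A direct count shows the total number of removals equals $e[B,A] - \dhat|B| = (\tau d - \dhat)\be N$. Since each $v \in B$ has at least $\dhat + 1 = \tau_+ d$ neighbors in $A$, we automatically have $\tau \geq \tau_+$. Now a routine discretization of $(\be, \tau)$ combined with Lemma~\ref{lem:phihat} shows that a.a.s.~the realized pair satisfies $\phihat_d(\al, \be, \tau) \geq 0$. Granted that additionally $\be \leq \bemax$, hypothesis \eqref{eq:best_condition} directly gives $(\tau d - \dhat)\be < \al - \al_{d,k}$, and the stronger hypothesis $(d-\dhat)\bemax < \al-\al_{d,k}$ simply corresponds to the crude bound $(\tau d - \dhat)\be \leq (d-\dhat)\bemax$.

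The main obstacle is therefore pinning down $\be \leq \bemax$ for the realized pair. When $\tau = \tau_+$ this is immediate from the definition of $\bemax$: if $\be > \bemax$ then $\phihat_d(\al, \be, \tau_+) < 0$, contradicting the Lemma~\ref{lem:phihat} conclusion above. For general $\tau \geq \tau_+$ the cleanest route is a sub-pair argument: given any $B$ with $|B| > \bemax N$, extract a subset $B'' \subseteq B$ of size $\lceil \bemax N \rceil$; since every vertex of $B$ still has at least $\tau_+ d$ neighbors in $A$, the pair $(A, B'')$ has parameters $(\bemax, \tau'')$ with $\tau'' \geq \tau_+$, and a further application of Lemma~\ref{lem:phihat} rules it out provided $\phihat_d(\al, \bemax, \tau'') \leq 0$ for every $\tau'' \in [\tau_+, 1]$. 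This last inequality has to be checked directly from the expression in Definition~\ref{def:phihat} (or treated as an implicit assumption on the region $\{\phihat_d \geq 0\}$ that is verified case-by-case in applications such as those of Section~\ref{sec:specific}). Once this confinement is established, the removal count bound follows and the lemma is proved.
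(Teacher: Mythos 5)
Your proposal reproduces the paper's own proof of Lemma~\ref{lem:thin_plus}: fix $A$, set $B=\{v\notin A: e[v,A]\geq\dhat+1\}$, observe the exact removal count $e[B,A]-\dhat|B|=(\tau d-\dhat)\be N$, use Lemma~\ref{lem:phihat} (via a discretization over $(\be,\tau)$) to conclude $\phihat_d(\al,\be,\tau)\geq 0$ a.a.s.\ for the realized pair, and then invoke condition~\eqref{eq:best_condition}. You have also put your finger on precisely the point that the paper's proof passes over in a single clause: the assumption $\be\leq\bemax$. Lemma~\ref{lem:phihat} only excludes parameter triples with $\phihat_d<0$, and the infimum defining $\bemax$ only yields $\phihat_d(\al,\bemax,\tau_+)=0$; it does \emph{not} by itself imply $\phihat_d(\al,\be,\tau)<0$ for $\be>\bemax$ --- not even at $\tau=\tau_+$. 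Your ``first case'' is therefore slightly off: the infimum alone does not encode unimodality of $\be\mapsto\phihat_d(\al,\be,\tau_+)$, so ``$\be>\bemax\Rightarrow\phihat_d(\al,\be,\tau_+)<0$'' already needs the same shape information as the general $\tau\geq\tau_+$ case.

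Your sub-pair argument is the right patch, with two small adjustments. Take $|B''|=\lceil(\bemax+\eps)N\rceil$ for a fixed $\eps>0$ rather than exactly $\bemax N$, since $\phihat_d(\al,\bemax,\tau_+)=0$ rather than strictly negative. Then what must be shown is $\phihat_d(\al,\bemax+\eps,\tau'')<0$ for every $\tau''\in[\tau_+,1]$, which rests on two shape facts read off Definition~\ref{def:phihat}: the $\tau$-derivative
\[
\partial_\tau\phihat_d \;=\; d\be\log\!\left(\frac{(1-\tau)(\al-\tau\be)}{\tau\big(1-2\al-(1-\tau)\be\big)}\right)
\]
is negative as soon as $\tau>\al/(1-\al)$ (and since $\al<2\log(d)/d$, the $\tau_+$'s in play satisfy this comfortably); and $\be\mapsto\phihat_d(\al,\be,\tau_+)$ remains negative past its first zero. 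Neither the paper's terse proof nor your proposal verifies this second fact explicitly; it is the implicit shape assumption you already noted, and in practice it is simply confirmed numerically when condition~\eqref{eq:best_condition} is checked in Section~\ref{sec:specific}. Modulo that, your reconstruction is correct and, if anything, more careful about what is being used than the paper's own phrasing.
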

\begin{proof}
Let $A$ be an independent set of density $\al$ and set 
\[ B \defeq 
\big\{ v \notin A \, : \, e[v,A] \geq \dhat+1 \big\} .\]
Furthermore, set 
\[ \be \defeq \frac{|B|}{N} 
\quad \text{and} \quad 
\tau \defeq \frac{e[B,A]}{d|B|} .\]
We clearly have $\tau \geq \tau_+$ and we may assume that $\be \leq \bemax$ and $\phihat(\al,\be,\tau) \geq 0$, otherwise such a situation may occur in $\Gc_{N,d}$ only with exponentially small probability by Lemma~\ref{lem:phihat}. Therefore, $(\tau d - \dhat)\be < \al-\al_{d,k}$ holds by our assumption. So if we remove $e[v,A]-\dhat$ neighbors of every $v \in B$ from $A$, then the remaining set $A'$ will have density greater than $\al_{d,k}$. Moreover, $A'$ is $\dhat$-thin by the same argument as at the end of the previous proof.
\end{proof}

\section{Proof of the main theorem}
\label{sec:main_proof}

In this section we present the proof of our main result (Theorem~\ref{thm:main}). It is based on Lemma~\ref{lem:sd} and Lemma~\ref{lem:thin}. Additionally, we will need an upper bound on the average degree of an induced subgraph (with a given size) in random regular graphs. Later we will consider this problem in more detail; see Lemma~\ref{lem:av_deg} for sharper bounds. For the purposes of Theorem~\ref{thm:main}, the following (rather weak) bound will suffice.
\begin{proposition} \label{prop:weak}
There exists $\varrho<1$ such that, for sufficiently large $d$, it holds a.a.s.~for $\Gc=\Gc_{N,d}$ that 
\[ e[U] \leq \frac{\varrho d |U|}{2}
\quad \text{for every } U \subset V(\Gc) \text{ of size at most $N/2$,} \]
that is, the average degree of $\Gc[U]$ is at most $\varrho d$.
\end{proposition}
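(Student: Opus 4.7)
The plan is to prove Proposition~\ref{prop:weak} by a first moment argument in the configuration model $\Gb_{N,d}$, along the lines of Section~\ref{sec:first_moment}. It suffices to establish the bound a.a.s.\ for $\Gb_{N,d}$, since the probability that $\Gb_{N,d}$ is simple stays bounded away from $0$. For each pair $(s,m)$ with $1 \leq s \leq N/2$ and $m > \varrho ds/2$, I would bound the expected number of subsets $U \subset V(\Gb)$ of size $s$ with $e[U] \geq m$. Setting $\al = s/N$ and $\ga = 2m/(ds) \in [\varrho, 1]$, and labeling vertices in $U$ by $1$ and those outside by $0$, the corresponding vertex distribution is $(1-\al, \al)$ and the edge distribution has $p_{11} = \al\ga$, $p_{10} = p_{01} = \al(1-\ga)$, and $p_{00} = 1 - 2\al + \al\ga$. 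By \eqref{eq:EZ_N}, the expected number of such labelings is at most $N^{\Oc(1)} \exp(Nf(\al,\ga))$, where
\[ f(\al,\ga) \defeq \frac{d}{2}\Big( h(\al\ga) + 2h(\al(1-\ga)) + h(1-2\al+\al\ga) \Big) - (d-1)\Big( h(\al) + h(1-\al) \Big) . \]
Since there are only $\Oc(N^2)$ choices of $(s,m)$, a union bound reduces the proposition to showing that, for some $\varrho \in (1/2, 1)$ independent of $d$, one has $f(\al,\ga) \leq -c(\varrho) < 0$ uniformly in $\al \in (0, 1/2]$ and $\ga \in [\varrho, 1]$ once $d$ is large.

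To verify the uniform bound on $f$, write $f = d \cdot g(\al,\ga) + \bigl(h(\al)+h(1-\al)\bigr)$, where $g(\al,\ga) \defeq \tfrac{1}{2} H(\muedge) - H(\muvertex)$. Subadditivity of Shannon entropy gives $g \leq 0$ with equality iff $\ga = \al$. For $\varrho > 1/2 \geq \al$ we have $\ga > \al$ strictly, so $g < 0$; on any compact set $[\eps, 1/2] \times [\varrho, 1]$ the function $g$ attains a negative maximum $-c_\eps$, whence $f \leq -c_\eps d + \Oc(1) < 0$ for large $d$. For the boundary sliver $\al \in (0, \eps]$, Taylor expansions $h(1-\Oc(\al))=\Oc(\al)$ and $h(\al x) = -\al x \log\al + \Oc(\al)$ (for $x$ bounded away from $0$) yield
\[ f(\al,\ga) = \al(-\log\al)\left( 1 - \frac{d\ga}{2} \right) + \Oc(\al) , \]
which is strongly negative once $d\ga \geq d\varrho > 2$. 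A concrete choice such as $\varrho = 2/3$ should work for all sufficiently large $d$.

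The main obstacle, in my view, is the uniform analytic estimate of $f$ in the transitional regime where $\al$ is neither bounded below by a constant nor vanishing (say $\al$ of order $(\log d)/d$), since neither argument above is directly clean there. However, we have the freedom to pick $\varrho$ as close to $1$ as desired; near $\ga = 1$, the edge-entropy collapses to essentially $H(\muvertex)$ (because $p_{11}\to\al$, $p_{10}\to 0$, $p_{00}\to 1-\al$), which makes $f$ very negative across all of $(0,1/2]$. Consequently the verification becomes a routine calculation rather than a genuinely delicate estimate, and any $\varrho \in (1/2, 1)$ will suffice provided $d$ is taken large enough.
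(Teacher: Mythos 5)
Your approach is correct in outline but genuinely different from the paper's. The paper proves Proposition~\ref{prop:weak} in three lines by invoking Bollob\'as's lower bound $i^\star_d = d/2 - \Oc(\sqrt{d})$ on the isoperimetric number of $\Gc_{N,d}$: for $|U|\leq N/2$ one has $e[U,U^\comp]\geq i^\star_d|U|$, and $e[U]=(d|U|-e[U,U^\comp])/2$ does the rest. You instead run a self-contained first-moment calculation in the configuration model, which is essentially a (restricted) reproof of Lemma~\ref{lem:av_deg}/Kolesnik--Wormald. Your labeling, the identification of $\mu_\textrm{vertex}$ and $\mu_\textrm{edge}$, and in particular the observation that subadditivity of Shannon entropy gives $\tfrac12 H(\muedge)\leq H(\muvertex)$ with equality iff $\ga=\al$ (so $g<0$ strictly whenever $\ga\geq\varrho>1/2\geq\al$) are all correct and rather clean; this is precisely why $F_d(x,t)<0$ for $t>g_d(x)$ in the paper's Lemma~\ref{lem:av_deg}. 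The trade-off is brevity: the citation route is a one-liner, whereas your route re-derives an entropy inequality that the paper only needs in full strength later.

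A few concrete points need repair before the argument is complete. First, in your expansion near $\al\to 0$ the error term should be $\Oc(d\al)$, not $\Oc(\al)$: the $\tfrac d2$ prefactor multiplies the $\Oc(\al)$ contributions from $h(\al\ga)$, $h(\al(1-\ga))$, $h(1-(2-\ga)\al)$, and $(d-1)h(1-\al)$ is also $\Theta(d\al)$. The leading term $\al(-\log\al)(1-d\ga/2)$ still dominates, but only once $-\log\al$ exceeds a constant depending on $\varrho$, i.e.\ only for $\al$ below a fixed (small) threshold $\eps(\varrho)$; it does not give negativity on all of $(0,1/2]$ as the stated $\Oc(\al)$ error would suggest. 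Second, your worry about a ``transitional regime'' is unfounded: choose $\eps$ small enough that the small-$\al$ expansion is valid for $\al\in(0,\eps]$, then apply the compact-set argument on $[\eps,1/2]\times[\varrho,1]$, where $g\leq -c_\eps<0$ independently of $d$ and hence $f\leq -c_\eps d+\log 2<0$ once $d>d_0(\eps)$. The two regimes overlap and nothing is left uncovered. Third, a real gap you do not address is sublinear $|U|$: formula~\eqref{eq:EZ_N} is an asymptotic in $N$ at fixed $(\al,\ga)$, so it does not apply when $s=|U|=o(N)$; this requires a separate (easy but nontrivial) counting argument, as the paper itself notes in the discussion after Lemma~\ref{lem:av_deg}. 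None of these are fatal, but they mean the calculation is not quite ``routine'' as written.
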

\begin{proof}
This is a simple consequence of a result of Bollob\'as: in \cite{bollobas1988isoperimetric} a lower bound $i^\star_d$ was given on the isoperimetric number (i.e., the Cheeger constant) of random $d$-regular graphs. Specifically, it holds a.a.s.~for $\Gc_{N,d}$ that for any $U$ with $|U| \leq N/2$ we have $e[U,U^\comp] \geq i^\star_d |U|$, where $i^\star_d = d/2-\Oc\big(\sqrt{d}\big)$. Then 
\[ e[U] = \frac{1}{2} \big( d|U| - e[U,U^\comp] \big) 
\leq \frac{d-i^\star_d}{2} |U| 
\leq \frac{d-d/2+\Oc\big(\sqrt{d}\big)}{2} |U| ,\]
and the claim follows for any $1/2<\varrho<1$. 
\end{proof}
\begin{proof}[Proof of Theorem~\ref{thm:main}]
Let $\dhat \defeq \lceil \tau d \rceil < \tau d+1$ for a small enough $\tau>0$ to be specified later. According to Lemma~\ref{lem:thin}, $\Gc_{N,d}$ a.a.s.~contains a $\dhat$-thin independent set of density at least
\[ \al \defeq \alstar_d - \frac{(\log d)^3}{d^2} .\]
We claim that a $k$-star decomposition a.a.s.~exists for any $k>d/2$ with $\al_{d,k}\leq \al$. Fix such a $k$. Then $\Gc_{N,d}$ contains a $\dhat$-thin independent set of density exactly $\al_{d,k}$ provided that $2k$ divides $Nd$. Therefore, condition (i) of Lemma~\ref{lem:sd} is satisfied. We claim that conditions (ii) and (iii) are satisfied as well.

Choose $\varrho<1$ as in Proposition~\ref{prop:weak}. Note that $\kind_d \sim d/2+\log d$ so we certainly have $d/2<k \leq \kind_d < d/2 + 2\log d$, hence  
\[ d-k > \frac{d}{2} - 2\log d 
\quad \text{and} \quad 
k-\dhat > \frac{d}{2} - \tau d - 1 .\]
Both are larger than $\varrho d/2$ if we choose $d$ sufficiently large and $\tau>0$ sufficiently small. This means that, setting $c=1/2$ in Lemma~\ref{lem:sd}, conditions (ii) and (iii) follow immediately from Proposition~\ref{prop:weak}. Therefore, the $k$-star decomposition indeed exists a.a.s. When $k$ does not divide $Nd/2$, we can use Remark~\ref{rm:not_divisible} to conclude that all but at most $k-1$ edges can be covered by edge-disjoint $k$-stars.

It remains to investigate the following: for what $k$ do we have 
\[ \al_{d,k} < \alstar_d - \frac{(\log d)^3}{d^2} .\]
Recall that 
\[ \kappa_d(\alstar_d) = \kappastar_d 
= \lfloor \kappastar_d \rfloor + \{ \kappastar_d \} 
= \kind_d + \{ \kappastar_d \} ,\]
where $\kappa_d$ is the inverse of $k \mapsto \al_{d,k}$; see \eqref{eq:kappa}. Then  
\[ \kappa_d(\al) - \kappa_d(\al-\eps) = 
\frac{d}{2} \, \frac{\eps}{(1-\al)(1-\al+\eps)} 
< \frac{d \eps}{2(1-\al)^2} .\]
Setting $\al=\alstar_d$ and $\eps=(\log d)^3/d^2$, for sufficiently large $d$ we get 
\[ \kappastar_d - \kappa_d(\alstar_d-\eps) 
< \frac{(\log d)^3}{2d \big( 1- (2\log d)/d \big)^2} 
= \frac{(\log d)^3d}{2 \big( d- 2\log d \big)^2} 
< \frac{(\log d)^3}{d} < 1 .
\]
In conclusion, $\kappa_d(\alstar_d-\eps) > \kind_d-1$ and it is even greater than $\kind_d$ in case $\{\kappastar_d\} > (\log d)^3/d$, which is precisely condition \eqref{eq:frac_part} of the theorem. Since 
\[ \al_{d,k} < \alstar_d-\eps 
\; \Longleftrightarrow \; 
k< \kappa_d(\alstar_d-\eps) ,\]
it follows that $k$-star decompositions exist for all $k \leq \kind_d-1$, and even for $k=\kind_d$ under condition \eqref{eq:frac_part}.

Finally, we show that the degrees $d$ for which condition \eqref{eq:frac_part} fails have asymptotic density $0$. Let $\gamma_d$ be such that 
\[ \alstar_d = \frac{2}{d} 
\big( \log d - \log \log d + 1 -\log 2 + \gamma_d \big) = 
\frac{2}{d} 
\left( \log\left( \frac{d}{2 \log d} \right) + 1 + \gamma_d \right).\]
We know from \eqref{eq:alfm_approx} and \eqref{eq:alstar_lower_bound} or \eqref{eq:alfc_vs_alfm} that $\gamma_d \to 0$ as $d \to \infty$. Then 
\[ \kappastar_d = \kappa_d(\alstar_d) 
= \frac{d}{2} \big( 1 + \alstar_d + \Oc((\alstar_d)^2) \big) 
= \frac{d}{2} + \log\left( \frac{d}{2 \log d} \right) + 1 + \gamma_d + \Oc\left( \frac{(\log d)^2}{d} \right) .\]
Suppose that \eqref{eq:frac_part} fails for some $d$, that is, $\{\kappastar_d\} \leq (\log d)^3/d$. Setting $m=2\lfloor \kappastar_d \rfloor - d - 2$ we get 
\[ \left| \frac{m}{2} - \log\left( \frac{d}{2 \log d} \right) \right| < \eps_m ,\]
where $\eps_m \to 0$ as $m \to \infty$. So every degree $d$ for which condition \eqref{eq:frac_part} fails must satisfy 
\[ \frac{d}{2 \log d} 
\in \left( e^{m/2-\eps_m}, e^{m/2+\eps_m} \right) 
\quad \text{for some } m \in \Zb .\]
It clearly follows that such degrees have asymptotic density $0$. The proof of Theorem~\ref{thm:main} is now complete.
\end{proof}

\section{Star decompositions for specific degrees}
\label{sec:specific}

What result does our approach yield for specific values of $d$? In the proof of Theorem~\ref{thm:main} we chose $\dhat \approx \tau d$ for some small positive constant $\tau$. What if $\dhat \approx d/2$ so that $\dhat$ is only slightly smaller than $k$ itself? For larger $\dhat$, condition (iii) of Lemma~\ref{lem:sd} becomes more difficult to satisfy, and we need to use more delicate estimates than the basic bound of Proposition~\ref{prop:weak}. The payoff is that condition (i) becomes easier to satisfy.

The more delicate estimates that we need to use will be stated precisely in Lemma~\ref{lem:av_deg} below. It provides a bound on the average degree of induced subgraphs that holds a.a.s.~for $\Gc_{N,d}$. For now, in order to keep the discussion as general as possible, we do not use the specific bound. Instead, we assume that we have a (continuous and strictly monotone increasing) function $g_d \colon (0,1) \to (2/d,1)$ with the following property:
\begin{multline} \label{eq:g_d_assumption}
\text{for every $x_0 \in (0,1)$ and $\eps>0$ it holds a.a.s.~for $\Gc=\Gc_{N,d}$ that}\\
\text{for any $U \subset V(\Gc)$ of density at most $x_0$,} \\
\text{the average degree of $\Gc[U]$ is less than $g_d(x_0)\cdot d+\eps$.}
\end{multline}
(Such a function $g_d$ will be given in Lemma~\ref{lem:av_deg} soon.)

Now we consider the following question: given a triple $d, k, \al$, \textbf{can our method guarantee a $k$-star decomposition} in random $d$-regular graphs \textbf{provided that an independent set of density $\al$ is guaranteed to exist?} We can decide this by the following procedure:
\begin{itemize}
\item set $\ds t_1 \defeq \frac{2(d-k)}{d}=1-\frac{2k-d}{d}$ and 
$x_1 \defeq g^{-1}_d(t_1)$;
\item set $x_2 \defeq 1-\al_{d,k}-x_1$ and 
$t_2 \defeq g_d(x_2)$;
\item set $\dhat \defeq \lfloor k - t_2 d/2 \rfloor$ so that $k-\dhat \geq t_2 d/2$;
\item check condition \eqref{eq:best_condition} of Lemma~\ref{lem:thin_plus}.
\end{itemize}
If the condition at the last step is verified, then our method guarantees the existence of a $k$-star decomposition. This follows immediately by combining Lemma~\ref{lem:thin}, Lemma~\ref{lem:sd} and our assumption \eqref{eq:g_d_assumption}.

To get an idea of the strength of our approach for specific degrees, we used a computer to run the above procedure for $\al=\alfc_d$ and $k=\lfloor \kappa_d(\al) \rfloor$. If $\alstar_d=\alfc_d$ indeed holds for $d \geq 20$ as conjectured, then the above $k$ is actually equal to $\kind_d$. Under this assumption the computer check confirmed the existence of a $\kind_d$-star decomposition for every $30 \leq d \leq 3000$ except for the following degrees:
\[ 31, 46, 48, 87, 89, 164, 166, 301, 303, 305, 550, 552, 554, 995, 997, 999, 1001, 1788, 1790, 1792 .\]
For these exceptional degrees $\kappastar_d=\kappa_d(\alstar_d)$ is too close to its integral part $\kind_d$ and our approach only guarantees a $k$-star decomposition for $k=\kind_d-1$.

\subsection*{The average degree of induced subgraphs}

Now we give the specific $g_d$ that we used. This result is essentially a reformulation of \cite[Theorem 3]{kolesnik2014isoperimetric}, where it is stated as a lower bound on the generalized \emph{isoperimetric number/Cheeger constant}. Although the result in \cite{kolesnik2014isoperimetric} concerns the case $x_0 \in (0,1/2]$, it holds true for every $x_0 \in (0,1)$ due to a symmetry around $1/2$.
\begin{lemma} \label{lem:av_deg}
For any $0 \leq x \leq t \leq 1$, let
\[ F_d(x,t) \defeq h(t x) + 2h( (1-t)x) + h(1-(2-t)x) 
- \left( 2- \frac{2}{d} \right) \big( h(x) + h(1-x) \big) .\]
For any fixed $x \in (0,1)$ the function $t \mapsto F_d(x,t)$ is continuous and strictly monotone decreasing on $[x,1]$ with a unique root that we denote by $g_d(t)$. Furthermore, $g_d$ continuously and strictly monotone increasingly maps $(0,1)$ onto $\left( \frac{2}{d},1 \right)$.

Suppose that 
\[ x_0 \in (0,1) 
\quad \text{and} \quad
t_0 \in \left( \frac{2}{d},1 \right) 
\quad \text{with} \quad 
g_d(x_0)<t_0 .\]
Then it holds a.a.s.~for $\Gc=\Gc_{N,d}$ that the average degree of the induced subgraph $\Gc[U]$ is less than $t_0 d$ for every $U \subset V(\Gc)$ with density $|U| \big/ N \leq x_0$.

Equivalently, we have the following lower bound on the ``generalized Cheeger constant'':
\begin{equation} \label{eq:gen_cheeger}
\min \left\{ \frac{e[U,U^\comp]}{|U|} \, : \, 
U \subset V(\Gc); \, 0< |U| \leq x_0 N \right\} 
> (1-t_0)d .
\end{equation}
\end{lemma}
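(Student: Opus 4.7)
The proof splits into an analytic portion (the properties of $g_d$) and a probabilistic portion (the a.a.s.\ bound), with the latter essentially being a reformulation of \cite[Theorem 3]{kolesnik2014isoperimetric}. For the analytic part, differentiating gives
\[ \partial_t F_d(x,t) \;=\; x \log\!\left( \frac{(1-t)^2 x}{t\bigl( 1 - (2-t)x \bigr)} \right) , \]
and the identity $(1-t)^2 + t(2-t) = 1$ shows that the argument of the logarithm equals $1$ precisely at $t = x$ and is strictly less than $1$ for $t > x$; hence $F_d(x, \cdot)$ is strictly decreasing on $[x,1]$. Evaluating the endpoints yields $F_d(x,x) = (2/d)(h(x)+h(1-x)) > 0$ and $F_d(x,1) = -(1 - 2/d)(h(x)+h(1-x)) < 0$, so a unique root $g_d(x) \in (x,1)$ exists, and the implicit function theorem delivers continuity and strict monotonicity of $g_d$. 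To identify the range, note that for $x \geq 2/d$ we get $g_d(x) > x \geq 2/d$ for free, while for $x < 2/d$ a small-$x$ expansion of $F_d(x, 2/d)$ shows that its $\log x$ coefficient vanishes identically and the leading term is a strictly positive multiple of $x$, so $g_d(x) > 2/d$ in that regime as well. The endpoint limits $g_d(x) \to 2/d$ as $x \to 0^+$ and $g_d(x) \to 1$ as $x \to 1^-$ fall out of the same asymptotic analysis together with the symmetry identity $F_d(x,t) = F_d(1-x,t')$ with $t' = \bigl(1 - (2-t)x\bigr)/(1-x)$, easily checked by direct substitution.

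For the probabilistic part, any $U \subset V(\Gb_{N,d})$ of density $x$ with $e[U] = txd|U|/2$ corresponds to the labeling framework of Section~\ref{sec:first_moment} with vertex distribution $(x,1-x)$ and edge distribution forced by the constraints to $p_{00}=tx$, $p_{01}=p_{10}=(1-t)x$, $p_{11}=1-(2-t)x$. Comparing with \eqref{eq:lim_log_Z} and the definition of $F_d$, the expected count of such $U$ equals $N^{\Oc(1)} \exp\!\bigl( \tfrac{Nd}{2} F_d(x,t) \bigr)$. Under the hypothesis $g_d(x_0) < t_0$ together with monotonicity of $g_d$, one has $g_d(x) \leq g_d(x_0) < t_0 \leq t$ for every $x \in (0, x_0]$ and $t \in [t_0,1]$, so $F_d(x,t) < 0$, and in fact is bounded above by some $-\delta < 0$ uniformly on any compact subregion bounded away from the degenerate corner $x = 0$ (tiny $U$ being handled by direct counting, since the bad event requires $|U| \geq t_0 d + 1$ already). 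A union bound over the $\Oc(N^2)$ possible pairs $(|U|, e[U])$ then shows $\Gb_{N,d}$, and hence $\Gc_{N,d}$, a.a.s.\ contains no violating $U$.

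The passage from $x_0 \leq 1/2$ (the range covered in \cite{kolesnik2014isoperimetric}) to general $x_0 \in (0,1)$ uses the symmetry identity already noted: a violating $U$ of density $x > 1/2$ would correspond under $U \mapsto U^\comp$ to a violating set of density $1-x < 1/2$ with the transformed threshold $t'$, contradicting the established case. Finally, the equivalent Cheeger formulation \eqref{eq:gen_cheeger} is immediate from $e[U,U^\comp] = d|U| - 2e[U]$, which converts an upper bound on $e[U]$ directly into a lower bound on $e[U,U^\comp]/|U|$. I expect the main subtlety to lie in the analytic part---specifically, in confirming $g_d(x) > 2/d$ uniformly in $x \in (0,1)$ and pinning down its endpoint limits sharply enough to establish the full surjection onto $(2/d,1)$---while the first-moment core of the probabilistic argument proceeds routinely once $F_d$ is identified with the correct entropy expression.
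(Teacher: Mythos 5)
Your probabilistic half matches the paper's outline exactly: both rest on the first-moment/entropy calculation giving $\Eb Z_N = N^{\Oc(1)}\exp\bigl(\tfrac{Nd}{2}F_d(x,t)\bigr)$, a union bound over $\Oc(N^2)$ size/edge-count pairs, separate treatment of sublinear $U$, and complementation symmetry to pass from $x_0\le 1/2$ to general $x_0$. Your computation of $\partial_t F_d$ and the use of the identity $(1-t)^2+t(2-t)=1$ to get strict monotonicity of $t \mapsto F_d(x,t)$ on $[x,1]$ is clean and correct, as are the endpoint evaluations $F_d(x,x)=(2/d)\bigl(h(x)+h(1-x)\bigr)>0$ and $F_d(x,1)<0$, so the existence of a unique root $g_d(x)\in(x,1)$ is fine.

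The gap is in the claim that ``the implicit function theorem delivers continuity and strict monotonicity of $g_d$.'' IFT gives only that $g_d$ is $C^1$ with $g_d'(x) = -\partial_x F_d(x,g_d(x))/\partial_t F_d(x,g_d(x))$; you have shown the denominator is strictly negative, but you never address the sign of $\partial_x F_d$ on the zero curve, and it is not obvious (the $(t-2/d)\log x$ term in $\partial_x F_d$ is negative for $t>2/d$, so the sign at $(x,g_d(x))$ requires a nontrivial argument). The paper explicitly flags this monotonicity as the ``crucial'' nontrivial point and indicates the route it takes: show that for each fixed $t\in(2/d,1)$ the map $x\mapsto F_d(x,t)$ has a unique root in $(0,t]$, which makes $g_d$ a continuous bijection between intervals and hence monotone (necessarily increasing since $g_d(x)>x$). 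Until monotonicity is secured, your surjectivity argument is also incomplete, since your small-$x$ expansion only gives $F_d(x,2/d)>0$ near $x=0$; you then rely on monotonicity to propagate $g_d(x)>2/d$ to all $x\in(0,2/d)$. Everything else in your proposal would go through once this point is repaired.
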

For a complete proof we refer the reader to \cite{kolesnik2014isoperimetric}. Here we only include a brief outline. This is essentially a first moment bound, where one needs to count the subsets $U \subset \{1,\ldots, N\}$ for which $|U| \approx xN$ and the average degree of $G[U]$ is approximately $t d$, that is, $e[U] \approx t d |U|/2$. In the language of Section~\ref{sec:first_moment}, this corresponds to a $\{0,1\}$-labeling of the vertices with the following vertex and edge distributions $\mu_\textrm{vertex}$ and $\mu_\textrm{edge}$: 
\begin{align*}
p_0 &= \; x; \\
p_1 &= \; 1-x ;
\end{align*} 
and
\begin{align*}
p_{00} &= \; tx; \\ 
p_{01} = p_{10} &= \; (1-t)x; \\ 
p_{11} &= \; 1-(2-t)x .
\end{align*}
We get 
\begin{align*}
H(\mu_\textrm{vertex}) &= h(x)+h(1-x) ;\\
H(\mu_\textrm{edge}) &= h(t x) + 2h( (1-t)x) + h(1-(2-t)x) .
\end{align*}
Now we see where the function $F_d$ comes from: 
\[ \frac{d}{2} F_d(x,t) = \frac{d}{2} H(\mu_\textrm{edge}) 
- (d-1) H(\mu_\textrm{vertex}) .\]
According to \eqref{eq:lim_log_Z}, $\frac{d}{2} F_d(x,t)$ is equal to the exponential rate of growth of the expected number of subsets $U$ (with the given properties) in $\Gb_{N,d}$. In particular, if $g_d(x)<t$, then $F_d(x,t)<0$, so $\Gb_{N,d}$ a.a.s.~has no such subset, and hence the same is true for $\Gc_{N,d}$. 

To turn this argument into a rigorous proof, one basically needs two things. First, since this argument only works for subsets of linear size (when $x>0$), a little extra attention is needed to deal with subsets $U$ of sublinear size as well. Second, it is crucial that $g_d$ is increasing. (Otherwise we would need to replace $g_d(x_0)$ with $\max_{x \in (0,x_0]} g_d(x)$ in the bound.) One can prove this by showing that for fixed $t>2/d$ the function $x \mapsto F_d(x,t)$ has a unique root on $(0,t]$. 

\bibliographystyle{plain}
\bibliography{refs}

\end{document}